\documentclass[12pt,amsfonts]{article}
\usepackage[margin=1in]{geometry}  
\usepackage{graphicx}              

\usepackage{amsmath, amssymb, amsthm, epsfig}               
\usepackage{enumerate}
\usepackage{amsfonts}              
\usepackage{amsthm}                
\usepackage{amsthm}
\theoremstyle{plain}
\usepackage{color}
\pretolerance=10000

\def\nd{\noindent}
\def\thend{\rule{3mm}{3mm}}

\newtheorem{claim}{Claim}[section]
\newtheorem{theorem}{Theorem}[section]

\newtheorem{proposition}{Proposition}[section]
\newtheorem{lemma}{Lemma}[section]
\newtheorem{rmk}{Remark}[section]

\newtheorem{cor}{Corollary}[section]
\newtheorem*{theorem*}{Theorem}

\numberwithin{equation}{section}


\begin{document}
\title{  A Hardy-Littlewood-Sobolev type inequality for variable exponents and applications to quasilinear Choquard equations involving variable exponent}
\author{Claudianor O. Alves\footnote{C.O. Alves was partially supported by CNPq/Brazil  301807/2013-2.} \,\, and \,\, Leandro da S. Tavares}  

\maketitle

\begin{abstract}
In this work we have proved a Hardy-Littlewood-Sobolev  inequality for variable exponents. After we use this inequality together with the variational method to establish the existence of solution for a class of Choquard equations involving the $p(x)$-Laplacian operator.  
\end{abstract}

{\scriptsize \textbf{2000 Mathematics Subject Classification:} 35A15, 35J62; 35J60.}

{\scriptsize \textbf{Keywords:} Variational methods, Quasilinear elliptic equations, Nonlinear Elliptic equations.}


\section{Introduction}

The stationary Choquard equation
\begin{equation}\label{Nonlocal.S1}
 -\Delta u +V(x)u =\Big(\int_{\mathbb{R}^N} \frac{|u|^p}{|x-y|^\lambda}\Big)|u|^{p-2}u  \quad \mbox{in} \quad \mathbb{R}^N
\end{equation}
where $N\geq3$,  $0<\lambda<N$, arises in many interesting physical situations in quantum theory and plays particularly an important
role in the theory of Bose-Einstein condensation where it accounts for the finite-range many-body interactions. For $N=3$, $p=2$ and $\lambda=1$, it was investigated by Pekar in \cite{P1} to study the quantum theory of a polaron at rest. In \cite{Li1}, Choquard applied it as approximation to Hartree-Fock theory of one-component plasma. This equation was also proposed by Penrose in \cite{MPT} as a model of selfgravitating matter and is known in that context as the Schr\"odinger-Newton equation. 

Motivated by these facts, at the last years a lot of articles have studied the existence and multiplicity of solutions for some problems  associated with (\ref{Nonlocal.S1}), see for example Ackermann \cite{AC}, Alves \& Yang \cite{AY1,AY2}, Cingolani, Secchi \& Squassina \cite{CCS}, Gao \& Yang \cite{GY}, Lions \cite{Ls}, Ma \& Zhao
\cite{ML},  Moroz \& Van Schaftingen \cite{MS2,MS3,MS4} and their references. 

In all the above mentioned papers the authors have used variational methods to show the existence of solution. This method works well thanks to a Hardy-Littlewood-Sobolev type inequality \cite{LL} that has the following statement 
\begin{proposition}[Hardy-Littlewood-Sobolev inequality]\label{HLS}
Let $t,r>1$ and $0<\lambda<N$ with $1/t+\lambda/N+1/r=2$, $f\in L^{t}(\mathbb{R}^N)$ and $h\in L^{r}(\mathbb{R}^N)$. There exists a sharp constant $C(t,N,\mu,r)$, independent of $f,h$, such that
\begin{equation}\label{HLS1}
\left|\int_{\mathbb{R}^{N}}\int_{\mathbb{R}^{N}}\frac{f(x)h(y)}{|x-y|^{\lambda}} dx dy \right|\leq C(t,N,\mu,r) \|f\|_{L^{t} (\mathbb{R}^N)}\|h\|_{L^{r}(\mathbb{R}^N)}.
\end{equation}
\end{proposition}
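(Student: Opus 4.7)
The plan is to reduce the double integral to an $L^{t'}$-norm estimate on a convolution-type operator and then control that operator by the Hardy-Littlewood maximal function via the Hedberg trick. By Fubini and duality, the inequality \eqref{HLS1} is equivalent to showing that the linear operator
\begin{equation*}
Tf(x) := \int_{\mathbb{R}^{N}}\frac{f(y)}{|x-y|^{\lambda}}\, dy
\end{equation*}
maps $L^{r}(\mathbb{R}^{N})$ into $L^{t'}(\mathbb{R}^{N})$, with $1/t+1/t'=1$. The scaling hypothesis $1/t+\lambda/N+1/r=2$ rewrites as $1/t' = 1/r - (N-\lambda)/N$, which is precisely the Hardy-Littlewood-Sobolev exponent associated with the Riesz potential $I_{N-\lambda}$, so this mapping property is exactly what one expects.

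For that mapping property I would fix $R>0$ and split
\begin{equation*}
Tf(x) = \int_{|x-y|\leq R}\frac{f(y)}{|x-y|^{\lambda}}\, dy + \int_{|x-y|>R}\frac{f(y)}{|x-y|^{\lambda}}\, dy =: A_{R}(x)+B_{R}(x).
\end{equation*}
A dyadic decomposition of the ball $B_R(x)$ into annuli $\{2^{-k-1}R<|x-y|\leq 2^{-k}R\}$, together with the definition of the Hardy-Littlewood maximal function $Mf$ and the convergence of $\sum_{k\geq 0} 2^{-k(N-\lambda)}$ (which uses $\lambda<N$), yields $|A_{R}(x)|\leq C_{1}\, R^{N-\lambda}\, Mf(x)$. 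H\"older's inequality with conjugate exponents $(r,r')$ on the far part gives $|B_{R}(x)|\leq C_{2}\, R^{N/r'-\lambda}\,\|f\|_{L^{r}(\mathbb{R}^{N})}$, and the exponent $N/r'-\lambda$ is negative precisely because $r<N/(N-\lambda)$, which is the dual form of the assumption $t>1$.

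I would then optimize $R=R(x)$ pointwise by equating the two bounds, which gives the Hedberg-type estimate
\begin{equation*}
|Tf(x)| \leq C\,\|f\|_{L^{r}(\mathbb{R}^{N})}^{\,1-r/t'}\, Mf(x)^{\,r/t'}.
\end{equation*}
Raising to the power $t'$, integrating, and invoking the Hardy-Littlewood maximal theorem (valid since $r>1$) produces $\|Tf\|_{L^{t'}(\mathbb{R}^{N})}\leq C\|f\|_{L^{r}(\mathbb{R}^{N})}$; dualizing closes the argument and delivers \eqref{HLS1}. The main technical point is the Hedberg optimization coupled with the $L^{r}$-boundedness of $M$; the hypothesis $t,r>1$ is used exactly to keep us away from the weak-type endpoint. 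Obtaining the \emph{sharp} constant $C(t,N,\lambda,r)$ requires a finer argument based on symmetric decreasing rearrangement due to Lieb, which I would not reproduce here since only the non-sharp inequality is needed for the variational framework developed in the paper.
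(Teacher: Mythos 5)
Your argument is correct, but it is worth noting that the paper does not actually prove Proposition \ref{HLS}: it is quoted as a known result from Lieb--Loss \cite{LL}, where the proof is built on symmetric-decreasing rearrangement and the layer-cake representation precisely because the sharp constant (and the optimizers) are the point there. What you give instead is the standard non-sharp proof: pass by duality to the Riesz-type operator and pair it against the $L^t$ factor (a small notational slip: the operator should be applied to $h\in L^r$ and tested against $f\in L^t$, not called $Tf$), then run Hedberg's near/far splitting, bound the near part by $Mh$ through dyadic annuli (using $\lambda<N$), the far part by H\"older (convergence of the tail integral being equivalent to $t>1$, as you check), optimize in $R$, and finish with the $L^r$-boundedness of the maximal operator ($r>1$). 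The exponent bookkeeping $1/t'=1/r-(N-\lambda)/N$ and the interpolation exponent $r/t'\in(0,1)$ are right, so the estimate closes. The trade-off between the two routes: the rearrangement proof in \cite{LL} yields the sharp constant claimed in the statement, which your Hedberg argument cannot and which you correctly flag and set aside; conversely, your route is elementary, self-contained, and delivers exactly the boundedness that is the only thing used later in the paper (e.g.\ in Proposition \ref{HSL-variable} and Lemma \ref{C1}), so for the variational framework it is fully adequate.
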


Motivated by the above papers, we intend to study the existence of solution for the following class of quasilinear problem 
\begin{equation} \label{Choquard-eq}
\left \{
\begin{array}{rclcl}
&-\Delta_{p(x)} u  + V(x) |u|^{p(x)-2}u = \left( \displaystyle\int_{\mathbb{R}^N} \frac{F(x,u(x))}{|x-y|^{\lambda(x,y)}}\right)f(y,u(y)) \ \mbox{in} \  \mathbb{R}^N, \\
&\\
&u \in W^{1,p(x)}(\mathbb{R}^N),
\end{array}
\right.
\end{equation}
where $V,p:\mathbb{R}^N \to \mathbb{R}$, $\lambda:\mathbb{R}^N \times \mathbb{R}^N \to \mathbb{R}$ and $f:\mathbb{R}^N \times \mathbb{R} \to \mathbb{R}$ are continuous functions, $F(x,t)$ is the primitive of $f(x,t)$, that is, 
$$
F(x,t)=\int_{0}^{t}f(x,s)\,ds
$$ 
and $\Delta_{p(x)}$ denotes the $p(x)$-Laplacian given by 
$$
\Delta_{p(x)}u=div(|\nabla u|^{p(x)-2}\nabla u).
$$ 

Our intention is showing that the variational method can also be used to prove the existence of solution for (\ref{Choquard-eq}). One of the main difficulties  is to show that the energy functional associated with (\ref{Choquard-eq}) given by 
$$
J(u)=\int_{\mathbb{R}^N}\frac{1}{p(x)}\left( |\nabla u|^{p(x)}+V(x)|u|^{p(x)}\right) -\frac{1}{2} \int_{\mathbb{R}^N} \int_{\mathbb{R}^N} \frac{F(x,u(x)) F(y,u(y))}{|x-y|^{\lambda(x,y)}}  dx dy
$$
is well defined and belongs to $C^{1}(W^{1,p(x)}(\mathbb{R}^N),\mathbb{R})$. In fact the main difficulty is to prove that
the functional $\Psi:W^{1,p(x)}(\mathbb{R}^N) \to \mathbb{R}$ given by
\begin{equation}\label{nonlocal-func}
\Psi(u)= \frac{1}{2} \int_{\mathbb{R}^N} \int_{\mathbb{R}^N} \frac{F(x,u(x))F(y,u(y))}{|x-y|^{\lambda(x,y)}} dx dy 
\end{equation}
belongs to $C^{1}(W^{1,p(x)}(\mathbb{R}^N), \mathbb{R})$ with
$$ 
\Psi'(u)v= \int_{\mathbb{R}^N} \int_{\mathbb{R}^N} \frac{F(x,u(x)f(y,u(y)) v(y))}{|x-y|^{\lambda(x,y)}} dxdy, \quad \forall u,v \in W^{1,p(x)}(\mathbb{R}^N).
$$

Here, we overcome this difficulty by establishing a Hardy-Littlewood-Sobolev type inequality for variable exponents, because this inequality is crucial in the proof that $\Psi \in C^{1}(W^{1,p(x)}(\mathbb{R}^N), \mathbb{R})$.

The $p(x)$-Laplacian operator possesses more complicated properties than
the $p$-Laplacian. For instance, it is inhomogeneous and in general, it has
no first eigenvalue, that is, the infimum of the eigenvalues of $p(x)$%
-Laplacian equals $0$ (see \cite{FZZ2}). Thus, transposing the results
obtained with the $p-$Laplacian to  problems with the $p(x)$-Laplacian
operator is not an easy task. The study of these problems are often very
complicated and require relevant topics of nonlinear functional analysis,
especially the theory of variable exponent Lebesgue and Sobolev spaces (see,
e.g., \cite{dhhr} and its abundant reference).

Partial differential equations involving the $p(x)$-Laplacian arise, for
instance, as a mathematical model for problems involving electrorheological
fluids and image restorations, see \cite{Acerbi1,Acerbi2,Antontsev,Chen,CLions,Ru}. This explains the intense
research on this subject in the last decades, see for example 
\cite{Alves2,Alves3,AlvesBarreiro,AlvesFerreira,AlvesFerreira1,AlvesSouto,Fan1,Fan,FanZhao0,FZ1,FZZ, FanShenZhao,BSS,BSS1,FuZa,MR,R*,RR*} and their references.

The plan of the paper is as follows: In Section 2 we recall some facts involving the variable exponent Sobolev space and prove the Hardy-Littlewood-Sobolev  inequality for variable exponents. In Section 3 we show that $\Psi$ is $C^{1}$ while in Section 4 we study the existence of solution of (\ref{Choquard-eq}) by assuming some conditions on $V(x)$ and $f(x,t)$.

\section{Variable exponent Sobolev space}
In this section we recall some results on variable exponent Sobolev spaces. The reader is referred to \cite{FZ,FanShenZhao}  and their references for more details.

In the sequel, we set
$$ 
C^{+}(\mathbb{R}^N):= \{ h \in C(\mathbb{R}^N)\,:\, 1<h^{-} \leq h^{+} < +\infty \}
$$
where
$$
h^{+}:= \sup_{x \in \mathbb{R}^N}h(x) \quad \mbox{and} \quad h^{-}:= \inf_{x \in \mathbb{R}^N}h(x). 
$$
For $p \in C^{+}(\mathbb{R}^N)$, let us consider the Lebesgue space
$$
L^{p(x)}(\mathbb{R}^N)= \left\{ u:\mathbb{R}^N \to \mathbb{R}; u \ \text{is a measurable and} \int_{\mathbb{R}^N}|u(x)|^{p(x)} \ dx < + \infty \right\}, 
$$
which becomes a Banach space when endowed with the Luxemburg norm
$$ 
\| u \|_{L^{p(x)}(\mathbb{R}^N)}=  \inf \left\{ \alpha >0; \int_{\mathbb{R}^N} \left|\frac{u(x)}{\alpha} \right|^{p(x)}  \ dx \leq 1\right\}.
$$

\begin{proposition}\label{norm-prop}
The functional $\rho_{p}: L^{p(x)}(\mathbb{R}^N) \rightarrow \mathbb{R}$ defined by
$$ 
\rho_{p}(u)=\int_{\mathbb{R}^N} |u(x)|^{p(x)} dx
$$
has the following properties:
\begin{enumerate}[(i)]
\item  $\| u\|_{L^{p(x)}(\mathbb{R}^N)} < 1 (=1;>1) \Longleftrightarrow \rho_{p}(u) < 1 (=1;>1). $
\item $\| u\|_{L^{p(x)}(\mathbb{R}^N)} > 1 \Longrightarrow \| u\|^{p^{-}}_{L^{p(x)}(\mathbb{R}^N)} \leq \rho_{p}(u)  \leq\| u\|^{p^{+}}_{L^{p(x)}(\mathbb{R}^N)}$. \\
$\| u\|_{L^{p(x)}(\mathbb{R}^N)} < 1 \Longrightarrow \| u\|^{p^{+}}_{L^{p(x)}(\mathbb{R}^N)} \leq \rho_{p}(u)  \leq \| u\|^{p^{-}}_{L^{p(x)}(\mathbb{R}^N)}$. 
\item $\| u_n\|_{L^{p(x)}(\mathbb{R}^N)}(\mathbb{R}^N) \rightarrow 0 \Longleftrightarrow \rho_{p}(u_n) \rightarrow 0 \ ; \ \| u_n\|_{L^{p(x)}(\mathbb{R}^N)} \rightarrow \infty \Longleftrightarrow \rho_{p}(u_n) \rightarrow \infty$.
\end{enumerate}
\end{proposition}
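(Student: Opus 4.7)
The plan is to derive all three parts from elementary properties of the real-valued function $g_u(\alpha) := \rho_p(u/\alpha) = \int_{\mathbb{R}^N} |u(x)|^{p(x)}\alpha^{-p(x)}\,dx$, defined for fixed $u \neq 0$ and $\alpha > 0$. First I would verify that $g_u$ is continuous and strictly decreasing on $(0,\infty)$, with $g_u(\alpha) \to +\infty$ as $\alpha \to 0^{+}$ and $g_u(\alpha) \to 0$ as $\alpha \to +\infty$; here one uses $1 < p^{-} \leq p^{+} < +\infty$ together with the monotone and dominated convergence theorems. Consequently there exists a unique $\alpha_{0} > 0$ with $g_u(\alpha_{0}) = 1$, and the definition of the Luxemburg norm forces $\|u\|_{L^{p(x)}(\mathbb{R}^N)} = \alpha_{0}$; in particular one obtains the key identity
\[
\rho_{p}\!\left(\frac{u}{\|u\|_{L^{p(x)}(\mathbb{R}^N)}}\right) = 1 \qquad \text{whenever } u \neq 0,
\]
which will drive both (i) and (ii).

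For (i), since $g_u$ is strictly decreasing, the location of its unique root at $1$ already determines the three cases: $\rho_{p}(u) = g_u(1) < 1$ forces $\alpha_{0} < 1$, $g_u(1) = 1$ forces $\alpha_{0} = 1$, and $g_u(1) > 1$ forces $\alpha_{0} > 1$. Since the three cases exhaust both sides of the bi-implication, the equivalences follow.

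For (ii), set $\tau := \|u\|_{L^{p(x)}(\mathbb{R}^N)}$ and invoke the key identity to write $\int_{\mathbb{R}^N} |u(x)|^{p(x)} \tau^{-p(x)}\,dx = 1$. When $\tau > 1$ we have $\tau^{-p^{+}} \leq \tau^{-p(x)} \leq \tau^{-p^{-}}$, and integrating against $|u|^{p(x)}$ yields $\tau^{-p^{+}}\rho_{p}(u) \leq 1 \leq \tau^{-p^{-}}\rho_{p}(u)$, which rearranges to the desired $\tau^{p^{-}} \leq \rho_{p}(u) \leq \tau^{p^{+}}$. When $\tau < 1$ the order of $\tau^{-p^{-}}$ and $\tau^{-p^{+}}$ is reversed and the identical computation produces the inequalities with the exponents swapped.

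Part (iii) is then a direct consequence of (ii). If $\|u_n\|_{L^{p(x)}(\mathbb{R}^N)} \to 0$, then eventually $\|u_n\|_{L^{p(x)}(\mathbb{R}^N)} < 1$ and the upper bound from (ii) gives $\rho_{p}(u_n) \leq \|u_n\|^{p^{-}}_{L^{p(x)}(\mathbb{R}^N)} \to 0$; conversely, $\rho_{p}(u_n) \to 0$ implies $\rho_{p}(u_n) < 1$ eventually, so by (i) we are in the regime $\|u_n\|_{L^{p(x)}(\mathbb{R}^N)} < 1$, where $\|u_n\|^{p^{+}}_{L^{p(x)}(\mathbb{R}^N)} \leq \rho_{p}(u_n)$ forces the norm to tend to $0$. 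The case $\|u_n\|_{L^{p(x)}(\mathbb{R}^N)} \to \infty$ is entirely symmetric. The only mildly delicate step in the whole argument is establishing the limit behavior of $g_u$ at $0$ and $\infty$, which is precisely where the assumption $p \in C^{+}(\mathbb{R}^N)$ enters through the finite bounds $p^{-},p^{+}$; everything else is bookkeeping on the defining inequality of the Luxemburg norm.
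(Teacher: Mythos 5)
Your argument is correct and is the standard proof of this classical result from the variable exponent literature: the paper itself states Proposition \ref{norm-prop} without proof (it is quoted from the references on $L^{p(x)}$ spaces, e.g.\ Fan--Zhao), and your route via the strict monotonicity and continuity of $\alpha \mapsto \rho_p(u/\alpha)$ together with the identity $\rho_p\bigl(u/\|u\|_{L^{p(x)}(\mathbb{R}^N)}\bigr)=1$ for $u \neq 0$ is exactly how it is established there. The only point worth a passing remark is the trivial case $u=0$ (respectively $u_n=0$), excluded from your key identity but handled immediately since then both the norm and the modular vanish.
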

For $p \in C^{+}(\mathbb{R}^N)$, let $p': \mathbb{R}^N \rightarrow \mathbb{R}$ such that
$$ \frac{1}{p(x)} + \frac{1}{p'(x)} =1, \ \text{a.e} \ x \in \mathbb{R}^N.$$
We have the following generalized H\"{o}lder inequality.
\begin{proposition}[\cite{Musielak}] For any $u \in L^{p(x)}(\mathbb{R}^N)$ and $v \in  L^{p'(x)}(\mathbb{R}^N)$,
$$
\left|\int_{\mathbb{R}^N} u(x)v(x) dx\right| \leq 2 \| u\|_{L^{p(x)}(\mathbb{R}^N)} \| v\|_{L^{p'(x)}(\mathbb{R}^N)}.
$$
	
\end{proposition}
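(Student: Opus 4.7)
The plan is to reduce the statement to the scalar Young inequality, applied pointwise with exponent $p(x)$, and then integrate. This is the standard argument for Orlicz--Musielak spaces, and the factor $2$ on the right-hand side comes naturally from combining two modular estimates.

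First I would dispose of the trivial case: if $\|u\|_{L^{p(x)}(\mathbb{R}^N)}=0$ or $\|v\|_{L^{p'(x)}(\mathbb{R}^N)}=0$, then (using that $L^{p(x)}$ is a normed space) $u=0$ or $v=0$ almost everywhere, and both sides vanish. So I may assume both norms are strictly positive and, exploiting the homogeneity of the inequality, normalise
$$\tilde u(x)=\frac{u(x)}{\|u\|_{L^{p(x)}(\mathbb{R}^N)}},\qquad \tilde v(x)=\frac{v(x)}{\|v\|_{L^{p'(x)}(\mathbb{R}^N)}}.$$
By the definition of the Luxemburg norm (equivalently, by part (i) of Proposition~\ref{norm-prop}), I get $\rho_{p}(\tilde u)\le 1$ and $\rho_{p'}(\tilde v)\le 1$.

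Next I would invoke the classical Young inequality with the pointwise pair of conjugate exponents $p(x), p'(x)>1$: for every $x\in\mathbb{R}^N$,
$$|\tilde u(x)\,\tilde v(x)|\;\le\;\frac{|\tilde u(x)|^{p(x)}}{p(x)}+\frac{|\tilde v(x)|^{p'(x)}}{p'(x)}.$$
This is legitimate because $1/p(x)+1/p'(x)=1$ almost everywhere and both $\tilde u(x),\tilde v(x)$ are nonnegative after taking absolute values. Integrating over $\mathbb{R}^N$, and using that $1/p(x)\le 1$ and $1/p'(x)\le 1$,
$$\int_{\mathbb{R}^N}|\tilde u(x)\tilde v(x)|\,dx\;\le\;\int_{\mathbb{R}^N}\frac{|\tilde u(x)|^{p(x)}}{p(x)}\,dx+\int_{\mathbb{R}^N}\frac{|\tilde v(x)|^{p'(x)}}{p'(x)}\,dx\;\le\;\rho_{p}(\tilde u)+\rho_{p'}(\tilde v)\;\le\;2.$$
Multiplying through by $\|u\|_{L^{p(x)}(\mathbb{R}^N)}\|v\|_{L^{p'(x)}(\mathbb{R}^N)}$ and using $|\int uv|\le \int|uv|$ yields the claimed inequality.

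There is no real obstacle in this argument, which is essentially mechanical once the measurability issues are noted. The only small subtlety is justifying that the normalised functions satisfy $\rho_p(\tilde u)\le 1$ (and likewise for $\tilde v$): this follows from the infimum definition of the Luxemburg norm together with the lower semicontinuity/monotone convergence property of $\rho_p$, which is precisely the content of Proposition~\ref{norm-prop}(i). Everything else is a pointwise application of the constant-exponent Young inequality, which is what makes the variable-exponent H\"older inequality inherit the same shape as its classical counterpart, at the cost of the universal constant $2$.
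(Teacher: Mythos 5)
Your proof is correct: the paper itself gives no proof of this proposition (it simply cites Musielak), and your normalization-plus-pointwise-Young argument, using $\rho_{p}(\tilde u)\le 1$, $\rho_{p'}(\tilde v)\le 1$ from the Luxemburg norm and the bound $1/p(x),\,1/p'(x)\le 1$, is exactly the standard argument behind that citation. The only cosmetic remark is that the same computation actually yields the slightly sharper constant $\tfrac{1}{p^-}+\tfrac{1}{(p')^-}\le 2$, but the constant $2$ is all the statement claims.
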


The Banach space $W^{1,p(x)}(\mathbb{R}^N)$ is defined as 
$$ 
W^{1,p(x)}(\mathbb{R}^N):= \left\{u \in L^{p(x)}(\mathbb{R}^N); |\nabla u| \in L^{p(x)}(\mathbb{R}^N) \right\}
$$
equipped with the norm
$$ 
\| u \|_{W^{1,p(x)}(\mathbb{R}^N)}:= \| u\|_{L^{p(x)}(\mathbb{R}^N)} + \| \nabla u\|_{L^{p(x)}(\mathbb{R}^N)}.
$$

In what follows, we denote by $h\ll g$ provided $\inf\{ g(x)-h(x); x \in \mathbb{R}^N\}>0$ where $h$ and $g$ are continuous functions. The following embeddings will be used frequently  in this work.

\begin{proposition}[\cite{FanShenZhao}]\label{cpt-emb}
Let $p: \mathbb{R}^N \rightarrow \mathbb{R}$ be a Lipschitz continuous function with $1<p^{-}\leq p^{+} < N$ and $s \in C^{+}(\mathbb{R}^N)$. 

\begin{enumerate}[(i)]
\item If $p \leq s \leq p^{\star}$, then there is a continuous embedding $W^{1,p(x)}(\mathbb{R}^N) \hookrightarrow L^{s(x)}(\mathbb{R}^N)$.	
\item If $p \leq s \ll p^{\star}$ then there is a compact embedding $W^{1,p(x)}(\mathbb{R}^N) \hookrightarrow L^{s(x)}_{\rm loc}(\mathbb{R}^N)$,	
\end{enumerate}
where  $p^{\star}(x):= Np(x)/(N-p(x))$ for all  $x \in \mathbb{R}^N.$	
\end{proposition}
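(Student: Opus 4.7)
The plan is to prove (i) via a localization-then-interpolation strategy, and then derive (ii) by combining (i) with the classical Rellich--Kondrachov theorem. For part (i), I would first establish the critical-exponent embedding $W^{1,p(x)}(\mathbb{R}^N) \hookrightarrow L^{p^{\star}(x)}(\mathbb{R}^N)$. The Lipschitz continuity of $p$ implies the log-Hölder condition $|p(x)-p(y)| \leq C/|\log|x-y||$ for $|x-y|$ small, which is the standard hypothesis guaranteeing that the exponent oscillates arbitrarily little on small cubes. Covering $\mathbb{R}^N$ by a countable locally finite family of cubes $\{Q_j\}$ of a fixed small side-length $\delta$, and setting $p_j := \inf_{Q_j} p$, the classical Sobolev inequality yields $\|u\|_{L^{p_j^{\star}}(Q_j)} \leq C \|u\|_{W^{1,p_j}(Q_j)}$ with $C$ independent of $j$ since the cubes are translates. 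Using the small oscillation together with Proposition \ref{norm-prop}, this constant-exponent estimate lifts to a modular estimate on each $Q_j$ with variable exponents $p(x)$ and $p^{\star}(x)$, and summing over $j$ gives the global embedding.

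Once the embedding into $L^{p^{\star}(x)}(\mathbb{R}^N)$ is in hand, the general case $p \leq s \leq p^{\star}$ follows from the pointwise decomposition
$$
\int_{\mathbb{R}^N} |u|^{s(x)}\,dx \leq \int_{\{|u|\leq 1\}} |u|^{p(x)}\,dx + \int_{\{|u|>1\}} |u|^{p^{\star}(x)}\,dx \leq \int_{\mathbb{R}^N} |u|^{p(x)}\,dx + \int_{\mathbb{R}^N} |u|^{p^{\star}(x)}\,dx,
$$
which, after converting modulars into norms via Proposition \ref{norm-prop}, gives $\|u\|_{L^{s(x)}(\mathbb{R}^N)} \leq C\bigl(\|u\|_{L^{p(x)}(\mathbb{R}^N)} + \|u\|_{L^{p^{\star}(x)}(\mathbb{R}^N)}\bigr)$ and hence~(i).

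For part (ii), fix a bounded domain $\Omega \subset \mathbb{R}^N$, set $\underline{p} := \inf_{\Omega} p$, and let $\{u_n\}$ be a bounded sequence in $W^{1,p(x)}(\mathbb{R}^N)$. Since $s \ll p^{\star}$ on $\overline{\Omega}$, there exists $\delta > 0$ with $s(x) + \delta < p^{\star}(x)$ on $\Omega$, so one can pick a constant exponent $q$ satisfying $\sup_{\Omega} s < q < \inf_{\Omega} p^{\star}$. The continuous embedding $W^{1,p(x)}(\Omega) \hookrightarrow W^{1,\underline{p}}(\Omega)$ together with classical Rellich--Kondrachov produces, up to a subsequence, strong convergence $u_n \to u$ in $L^{\underline{p}}(\Omega)$, hence almost everywhere in $\Omega$. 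By part (i), $\{u_n\}$ is bounded in $L^{p^{\star}(x)}(\Omega)$, and a Vitali-type argument (or a direct interpolation using the gap $s(x) + \delta < p^{\star}(x)$) rules out concentration in the $L^{s(x)}$-modular, yielding $u_n \to u$ in $L^{s(x)}(\Omega)$.

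The main obstacle is the first step of (i): transferring the constant-exponent Sobolev inequality on each $Q_j$ into a bound for the variable-exponent modulars that sums properly over all cubes. This step hinges critically on the log-Hölder (here, Lipschitz) regularity of $p$; without such regularity the embedding $W^{1,p(x)} \hookrightarrow L^{p^{\star}(x)}$ can fail, so this hypothesis enters in an essential way. A secondary technical point is the absence-of-concentration step in (ii), where the strict inequality $s \ll p^{\star}$ is precisely what allows one to interpolate between the Rellich compactness at the sub-critical level and the boundedness at the critical level furnished by part (i).
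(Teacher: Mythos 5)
First, a remark on the comparison itself: the paper offers no proof of Proposition \ref{cpt-emb} — it is quoted from the variable-exponent literature (Fan--Shen--Zhao and related works) — so there is no in-paper argument to measure yours against; what follows assesses your sketch on its own terms. Your overall route is the standard one: freeze the exponent on a locally finite cover by small cubes, use the constant-exponent Sobolev inequality there, lift to variable exponents via the Lipschitz (hence log-H\"older) regularity of $p$, obtain the full range $p\le s\le p^{\star}$ by splitting over $\{|u|\le 1\}$ and $\{|u|>1\}$, and deduce (ii) from Rellich--Kondrachov at a frozen subcritical exponent plus uniform integrability supplied by (i). The splitting argument and the Vitali step are correct as stated.

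Two points, however, are genuine gaps as written. (a) The central step of (i) — converting $\|u\|_{L^{p_j^{\star}}(Q_j)}\le C\|u\|_{W^{1,p_j}(Q_j)}$ into a variable-exponent modular estimate and summing over the infinite family $\{Q_j\}$ — is only asserted, and it is exactly where the theorem lives. On $Q_j$ one has $p^{\star}(x)\ge p_j^{\star}$, so $|u(x)|^{p^{\star}(x)}\le |u(x)|^{p_j^{\star}}$ only where $|u|\le 1$; where $|u|>1$ the comparison goes the wrong way, and controlling $|u(x)|^{p^{\star}(x)-p_j^{\star}}$ is precisely what the log-H\"older condition is for (via a pointwise bound of $|u|$ on $Q_j$ by a negative power of $\delta$, or via the maximal function). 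Moreover, the per-cube error terms this produces must be summable over infinitely many cubes, which requires normalizing $\|u\|_{W^{1,p(x)}(\mathbb{R}^N)}\le 1$ and making the errors comparable to local modulars rather than to $|Q_j|$; "small oscillation plus Proposition \ref{norm-prop}" does not by itself deliver this. (b) In (ii), the inference that $s\ll p^{\star}$ on $\overline{\Omega}$ yields a constant $q$ with $\sup_{\Omega}s<q<\inf_{\Omega}p^{\star}$ is false in general: both exponents may oscillate so that $\sup_{\Omega}s>\inf_{\Omega}p^{\star}$ even though $s(x)+\delta\le p^{\star}(x)$ pointwise. The slip is harmless only because your parenthetical alternative is the right argument: the pointwise gap together with the generalized H\"older inequality gives uniform integrability of $|u_n-u|^{s(x)}$ on $\Omega$ from boundedness in $L^{p^{\star}(x)}(\Omega)$, and Vitali plus the a.e. convergence (obtained from $W^{1,p(x)}(\Omega)\hookrightarrow W^{1,\underline{p}}(\Omega)$ and classical Rellich--Kondrachov on a ball) concludes. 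With (a) filled in and (b) replaced by the pointwise interpolation, the sketch becomes a complete proof along the lines of the cited references.
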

We also need of the following Lions' Lemma for variable exponent found in \cite{Fan-Zhao-Zhao}. For $r>0$ and $ y\in\mathbb{R}^N$ we denote by $B_r(y)$ the open ball in $\mathbb{R}^N$ with center $y$ and radius $r$.

\begin{lemma}\label{lions-variable}Let $p: \mathbb{R}^N \rightarrow \mathbb{R}$ be a Lipschitz continuous function with $1<p^{-}\leq p^{+} < N$. If $(u_n)$ is a bounded sequence in $W^{1,p(x)}(\mathbb{R}^N)$ such that
	$$ \lim_{n \rightarrow + \infty} \sup_{y \in \mathbb{R}^N} \int_{B_r(y)}|u_n(x)|^{p(x)} dx = 0$$
for some $r>0$, then $u_n \rightarrow 0$ in $L^{q(x)}(\mathbb{R}^N)$ for any $q \in C^{+}(\mathbb{R}^N)$ satisfying $p \ll q \ll p^{\star}.$
\end{lemma}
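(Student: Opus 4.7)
The plan is to adapt the classical Lions concentration-compactness argument to the variable-exponent setting via a bounded-multiplicity cover and a local interpolation inequality, exploiting the Lipschitz regularity of $p$ to reduce, on small balls, to nearly constant exponents. Since $(u_n)$ is bounded in $W^{1,p(x)}(\mathbb{R}^N)$, Proposition \ref{cpt-emb}(i) gives that $(u_n)$ is also bounded in $L^{p^{\star}(x)}(\mathbb{R}^N)$ and in $L^{q(x)}(\mathbb{R}^N)$; by Proposition \ref{norm-prop}(iii) it is enough to show $\rho_q(u_n):=\int_{\mathbb{R}^N}|u_n|^{q(x)}\,dx\to 0$. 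First I would cover $\mathbb{R}^N$ by balls $\{B_r(y_i)\}_i$ with multiplicity bounded by $M=M(N)$; the radius $r$ can be shrunk at will, since the hypothesis $\sup_y\int_{B_r(y)}|u_n|^{p(x)}\to 0$ is inherited by any smaller radius.

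The core step is a local estimate of the form
$$
\int_{B_r(y_i)}|u|^{q(x)}\,dx \;\leq\; C\,\Big(\int_{B_r(y_i)}|u|^{p(x)}\,dx\Big)^{\sigma}\,\|u\|_{W^{1,p(x)}(B_r(y_i))}^{\kappa},
$$
with $\sigma,\kappa>0$ independent of $i$ and $u$. To establish it I would start from the pointwise identity $|u|^{q(x)}=(|u|^{p(x)})^{\theta(x)}(|u|^{p^{\star}(x)})^{1-\theta(x)}$, where $\theta(x)=(p^{\star}(x)-q(x))/(p^{\star}(x)-p(x))$ lies in a compact subinterval of $(0,1)$ because $p\ll q\ll p^{\star}$. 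Picking $r$ small enough that the oscillations of $p,q,p^{\star}$ on each $B_r(y_i)$ are strictly dominated by the uniform gaps $\inf_x(q-p)$ and $\inf_x(p^{\star}-q)$, I can replace $\theta(x)$ by constants $\theta_i\in(0,1)$ on each ball (carefully separating $\{|u|\leq 1\}$ and $\{|u|>1\}$, since $|u|^s$ is monotone in opposite directions on those sets) and then apply constant-exponent Hölder to extract the factors $\int_{B_r(y_i)}|u|^{p(x)}$ and $\int_{B_r(y_i)}|u|^{p^{\star}(x)}$. The variable-exponent Sobolev embedding $W^{1,p(x)}(B_r(y_i))\hookrightarrow L^{p^{\star}(x)}(B_r(y_i))$, with constant uniform in $i$ by translation and the Lipschitz bound on $p$, finally converts the $L^{p^{\star}(x)}$-factor into a power of $\|u\|_{W^{1,p(x)}(B_r(y_i))}$.

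Given the local estimate, summing over $i$ and writing $A_i^{\sigma}=A_i^{\sigma-\sigma'}A_i^{\sigma'}$ with $A_i:=\int_{B_r(y_i)}|u_n|^{p(x)}$ produces
$$
\int_{\mathbb{R}^N}|u_n|^{q(x)}\,dx \;\leq\; C\,\Big(\sup_y\int_{B_r(y)}|u_n|^{p(x)}\,dx\Big)^{\sigma-\sigma'} \sum_i A_i^{\sigma'}\,\|u_n\|_{W^{1,p(x)}(B_r(y_i))}^{\kappa},
$$
for any $0<\sigma'<\sigma$. Choosing $\sigma'$ and $\kappa$ so that the remaining sum is controlled by $\|u_n\|_{W^{1,p(x)}(\mathbb{R}^N)}$ (using bounded multiplicity, $\sum_i A_i\leq M\rho_p(u_n)$, and a discrete Hölder inequality), the hypothesis on the sup together with the $W^{1,p(x)}$-bound on $(u_n)$ forces $\rho_q(u_n)\to 0$, and Proposition \ref{norm-prop}(iii) yields $\|u_n\|_{L^{q(x)}(\mathbb{R}^N)}\to 0$. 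The main obstacle is the local interpolation: Hölder for variable exponents does not directly give products of norms raised to \emph{constant} powers, so the reduction to constant exponents on small balls via the Lipschitz regularity of $p$, with the attendant $\{|u|\leq 1\}$ versus $\{|u|>1\}$ bookkeeping, is the place where the Lipschitz hypothesis is essentially used.
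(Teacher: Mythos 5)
The paper does not actually prove Lemma \ref{lions-variable}: it is quoted from \cite{Fan-Zhao-Zhao}, so there is no internal proof to compare against. Your strategy --- the classical Lions vanishing argument (cover $\mathbb{R}^N$ by balls of bounded overlap, interpolate locally between $L^{p(x)}$ and $L^{p^{\star}(x)}$, use the Sobolev embedding on each ball with a uniform constant, then sum using the smallness of $\sup_{y}\int_{B_r(y)}|u_n|^{p(x)}dx$) --- is the natural adaptation of the constant-exponent proof and is, in outline, viable; reducing everything to the modular $\rho_q(u_n)$ via Proposition \ref{norm-prop} is also the right move.

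Two points in your sketch need attention, one of which is a genuine flaw as written. You propose to ``pick $r$ small enough that the oscillations of $p,q,p^{\star}$ on each $B_r(y_i)$ are small''; this is impossible for $q$, which is only assumed continuous on $\mathbb{R}^N$ (no uniform modulus of continuity), so no single $r$ controls its oscillation on all of the infinitely many balls. Fortunately the oscillation of $q$ is not needed: only $p$ and $p^{\star}$ must have small oscillation, and both are globally Lipschitz (for $p^{\star}$ because $t\mapsto Nt/(N-t)$ has bounded derivative on $[p^-,p^+]\subset(1,N)$). For $q$ you should instead bound $|u|^{q(x)}$ by $|u|^{\sup_{B_i}q}$ on $\{|u|>1\}$ and by $|u|^{\inf_{B_i}q}$ on $\{|u|\leq 1\}$; the uniform gaps coming from $p\ll q\ll p^{\star}$, combined with the small oscillation of $p$ and $p^{\star}$, keep these two constants strictly between the local values of $p$ and $p^{\star}$, with interpolation parameters $\theta_i$ bounded away from $0$ and $1$ uniformly in $i$, which is what the final estimate really requires. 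Second, the summation step is stated too loosely: powers of the local norms $\|u_n\|_{W^{1,p(x)}(B_r(y_i))}$ do not add up over the cover; you must first convert them into local modulars (legitimate because these norms are uniformly bounded, by the modular--norm inequalities of Proposition \ref{norm-prop} on domains) and choose $\sigma'$ and $\kappa$ so that what remains is exactly a modular sum controlled, via bounded multiplicity, by $\rho_p(u_n)+\rho_{p}(\nabla u_n)$. With these repairs your argument goes through; as literally written, the uniform control of the oscillation of $q$ is the step that fails.
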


Next, we prove Hardy-Littlewood-Sobolev type inequality for variable exponents.

\begin{proposition}[Hardy-Littlewood-Sobolev type inequality for variable exponents] \label{HSL-variable}  Let $p,q \in C^{+}(\mathbb{R}^N)$, $h \in L^{p^+}(\mathbb{R}^N)\cap L^{p^-}(\mathbb{R}^N)$, $g \in L^{q^+}(\mathbb{R}^N)\cap L^{q^-}(\mathbb{R}^N)$ and $\lambda: \mathbb{R}^N \times \mathbb{R}^N \rightarrow \mathbb{R}$ be a continuous function such that $0<\lambda^{-}\leq \lambda^{+} < N$ and 
$$ 
\frac{1}{p(x)}  +\frac{\lambda(x,y)}{N} + \frac{1}{q(y)} = 2, \quad \forall  x,y \in \mathbb{R}^N.
$$
Then,
$$ 
\left| \int_{\mathbb{R}^N}\int_{\mathbb{R}^N} \frac{h(x)g(y)}{|x-y|^{\lambda(x,y)}} dx dy \right|\leq C(\| h\|_{L^{p^+}(\mathbb{R}^N)} \| g\|_{L^{q^+}(\mathbb{R}^N)} + \| h\|_{L^{p^-}(\mathbb{R}^N)} \| g\|_{L^{q^-}(\mathbb{R}^N)}) 
$$
where $C>0$ is a constant that does not depend on $h$ and $g.$
\end{proposition}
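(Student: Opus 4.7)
The plan is to reduce the variable-exponent inequality to two applications of the classical Hardy--Littlewood--Sobolev inequality (Proposition \ref{HLS}) by splitting the domain of integration into a ``near-diagonal'' piece and a ``far-diagonal'' piece and by using constant exponents $(p^+,q^+,\lambda^+)$ on one piece and $(p^-,q^-,\lambda^-)$ on the other.

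The key preliminary observation is that the pointwise constraint $\frac{1}{p(x)}+\frac{\lambda(x,y)}{N}+\frac{1}{q(y)}=2$ can be rewritten as $\lambda(x,y)=N(2-\frac{1}{p(x)}-\frac{1}{q(y)})$. Since $\frac{1}{p(x)}\in[\frac{1}{p^+},\frac{1}{p^-}]$ and $\frac{1}{q(y)}\in[\frac{1}{q^+},\frac{1}{q^-}]$ independently, taking suprema and infima yields
$$
\lambda^+ = N\Bigl(2-\tfrac{1}{p^+}-\tfrac{1}{q^+}\Bigr),\qquad \lambda^- = N\Bigl(2-\tfrac{1}{p^-}-\tfrac{1}{q^-}\Bigr),
$$
so that $\frac{1}{p^+}+\frac{\lambda^+}{N}+\frac{1}{q^+}=2$ and $\frac{1}{p^-}+\frac{\lambda^-}{N}+\frac{1}{q^-}=2$. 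Combined with $1<p^-\le p^+<\infty$, $1<q^-\le q^+<\infty$ and $0<\lambda^-\le\lambda^+<N$, this means the classical HLS inequality is applicable with each pair of constant exponents.

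Next, I would write $\mathbb{R}^N\times\mathbb{R}^N = A\cup B$ where $A=\{(x,y):|x-y|\le 1\}$ and $B=\{(x,y):|x-y|>1\}$. On $A$, since $\lambda(x,y)\le\lambda^+$ and $|x-y|\le 1$, one has $|x-y|^{-\lambda(x,y)}\le |x-y|^{-\lambda^+}$. On $B$, since $\lambda(x,y)\ge\lambda^-$ and $|x-y|>1$, one has $|x-y|^{-\lambda(x,y)}\le |x-y|^{-\lambda^-}$. Hence
$$
\left|\int_{\mathbb{R}^N}\!\!\int_{\mathbb{R}^N}\frac{h(x)g(y)}{|x-y|^{\lambda(x,y)}}\,dx\,dy\right| \le \int_{\mathbb{R}^N}\!\!\int_{\mathbb{R}^N}\frac{|h(x)||g(y)|}{|x-y|^{\lambda^+}}\,dx\,dy + \int_{\mathbb{R}^N}\!\!\int_{\mathbb{R}^N}\frac{|h(x)||g(y)|}{|x-y|^{\lambda^-}}\,dx\,dy,
$$
after enlarging each regional integral to the whole space (which is legitimate because the integrands are nonnegative).

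Finally, applying Proposition \ref{HLS} to the first double integral with $(t,r,\lambda)=(p^+,q^+,\lambda^+)$ and to the second with $(t,r,\lambda)=(p^-,q^-,\lambda^-)$ produces bounds $C_1\|h\|_{L^{p^+}}\|g\|_{L^{q^+}}$ and $C_2\|h\|_{L^{p^-}}\|g\|_{L^{q^-}}$ respectively; taking $C=\max\{C_1,C_2\}$ yields the claimed inequality. There is no serious obstacle: the only delicate point is verifying that the pointwise balance condition transfers correctly to constant-exponent balances at the extremal values $p^\pm,q^\pm,\lambda^\pm$, which is the content of the first paragraph.
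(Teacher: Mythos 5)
Your proof is correct and follows essentially the same route as the paper: establish the extremal balance relations $\frac{1}{p^{+}}+\frac{\lambda^{+}}{N}+\frac{1}{q^{+}}=2$ and $\frac{1}{p^{-}}+\frac{\lambda^{-}}{N}+\frac{1}{q^{-}}=2$, bound the kernel by $|x-y|^{-\lambda^{+}}+|x-y|^{-\lambda^{-}}$ (your near/far-diagonal split is exactly this pointwise estimate), and apply the classical Hardy--Littlewood--Sobolev inequality twice with constant exponents. Your pairing of $\lambda^{+}$ with $(p^{+},q^{+})$ and $\lambda^{-}$ with $(p^{-},q^{-})$ is the one consistent with the classical balance condition; the paper's two intermediate displays have these labels interchanged (a harmless slip, since the final sum of the two products is the same).
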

\begin{proof} First of all, note that 
$$
	\lambda(x,y) = 2N \left( 1 - \frac{1}{2p(x)} - \frac{1}{2q(y)}\right) \leq 2N \left( 1 - \frac{1}{2p^+} - \frac{1}{2q^{+}}\right), \quad \forall x,y \in \mathbb{R}^N.
$$
Therefore, 
$$
\lambda^{+}=\sup_{x,y \in \mathbb{R}^N}\lambda(x,y)\leq 2N \left( 1 - \frac{1}{2p^+} - \frac{1}{2q^{+}} \right).
$$
Now, if $(x_n), (y_n) \subset \mathbb{R}^N$ are sequences satisfying
$$
p(x_n) \to p^+ \quad \mbox{and} \quad q(y_n) \to q^+ 
$$
we see that 
$$
\lambda(x_n,y_n) \to 2N \left( 1 - \frac{1}{2p^+} - \frac{1}{2q^{+}}\right)
$$
from where it follows 
$$
\lambda^{+}=2N \left( 1 - \frac{1}{2p^+} - \frac{1}{2q^{+}} \right)
$$
or equivalently
\begin{equation}\label{HSL2'}
\frac{1}{p^{+}} + \frac{\lambda^+}{N} + \frac{1}{q^{+}} =2.
\end{equation}
Likewise
\begin{equation}\label{HSL2}
\frac{1}{p^{-}} + \frac{\lambda^{-}}{N} + \frac{1}{q^{-}} =2.
\end{equation}
Since
$$
\frac{1}{|x-y|^{\lambda(x,y)}} \leq \frac{1}{|x-y|^{\lambda^+}} +
\frac{1}{|x-y|^{\lambda^-}} \quad \forall x,y \in \mathbb{R}^N,
$$ 
we derive that 
$$
\left| \int_{\mathbb{R}^N}\int_{\mathbb{R}^N} \frac{h(x)g(y)}{|x-y|^{\lambda(x,y)}} dx dy \right| \leq  \int_{\mathbb{R}^N}\int_{\mathbb{R}^N} \frac{|h(x)||g(y)|}{|x-y|^{\lambda^{-}}}dx dy +  \int_{\mathbb{R}^N}\int_{\mathbb{R}^N} \frac{|h(x)||g(y)|}{|x-y|^{\lambda^{+}}} dx dy
$$
Gathering (\ref{HSL2'}), (\ref{HSL2}) and  Proposition \ref{HLS1} we get 
$$
\int_{\mathbb{R}^N}\int_{\mathbb{R}^N} \frac{|h(x)||g(y)|}{|x-y|^{\lambda^{-}}}dx dy \leq C\| h\|_{L^{p^+}(\mathbb{R}^N)} \| g\|_{L^{q^+}(\mathbb{R}^N)}
$$
and
$$
\int_{\mathbb{R}^N}\int_{\mathbb{R}^N} \frac{|h(x)||g(y)|}{|x-y|^{\lambda^{+}}} dx dy
\leq C \| h\|_{L^{p^-}(\mathbb{R}^N)} \| g\|_{L^{q^-}(\mathbb{R}^N)}.
$$
From the last two inequalities,  
$$
\left| \int_{\mathbb{R}^N}\int_{\mathbb{R}^N} \frac{h(x)g(y)}{|x-y|^{\lambda(x,y)}} dx dy \right|\leq  C(\| h\|_{L^{p^+}(\mathbb{R}^N)} \| g\|_{L^{q^+}(\mathbb{R}^N)} + \| h\|_{L^{p^-}(\mathbb{R}^N)} \| g\|_{L^{q^-}(\mathbb{R}^N)}),
$$
showing the inequality. 
\end{proof}
In this  work we will also consider that $p: \mathbb{R}^N \rightarrow \mathbb{R}$ is a Lipschitz function with $p \in C^{+}(\mathbb{R}^N).$ 
The next corollary is a key point in our arguments.
\begin{cor}\label{corollary}  Let $q \in C^{+}(\mathbb{R}^N)$ and $\lambda: \mathbb{R}^{N} \times \mathbb{R}^{N} \to \mathbb{R}$ be a function satisfying 
\begin{equation}\label{symetry}
\frac{1}{q(x)} + \frac{\lambda(x,y)}{N}+ \frac{1}{q(y)}=2, \quad \forall x,y \in \mathbb{R}^N.
\end{equation}
If $u \in W^{1,p(x)}(\mathbb{R}^N)$ and $r \in \mathcal{M}$ where 
\begin{equation}\label{growth1}
\mathcal{M}=\left\{ r \in C^{+}(\mathbb{R}^N)\,:\,p(x)\leq r(x) q^{-} \leq r(x)q^{+} \leq p^{\star}(x), \quad \forall x \in \mathbb{R}^N  \right\},
\end{equation}
then $U(x)=|u(x)|^{r(x)} \in L^{q^-}(\mathbb{R}^N) \cap L^{q^+}(\mathbb{R}^N)$. Moreover,
$$
\begin{array}{l}
\displaystyle \left| \int_{\mathbb{R}^N}\int_{\mathbb{R}^N} \frac{|u(x)|^{r(x)}|u(y)|^{r(y)}}{|x-y|^{\lambda(x,y)}} dx dy \right| \leq  C(\| |u|^{r(x)}\|^{2}_{L^{q^{+}}(\mathbb{R}^N)} + \| |u|^{r(y)}\|^{2}_{L^{q^{-}}(\mathbb{R}^N)})
\end{array}
$$
where $C$ is a constant that does not depend on $u \in W^{1,p(x)}(\mathbb{R}^N)$.
\end{cor}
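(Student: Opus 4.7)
The plan is to reduce the statement to a direct application of the variable-exponent Hardy-Littlewood-Sobolev inequality (Proposition \ref{HSL-variable}) by taking, in that proposition, the two functions to be $h=g=U:=|u|^{r(\cdot)}$ and the two variable exponents both equal to $q$. With this choice, the balance condition required by Proposition \ref{HSL-variable} is precisely the symmetry hypothesis \eqref{symetry}, so the work reduces to justifying the integrability $U\in L^{q^-}(\mathbb{R}^N)\cap L^{q^+}(\mathbb{R}^N)$ and then quoting the proposition.

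First I would establish the integrability. Because $r\in\mathcal{M}$, the two functions $x\mapsto r(x)q^-$ and $x\mapsto r(x)q^+$ lie in $C^{+}(\mathbb{R}^N)$ and satisfy
$$
p(x)\ \leq\ r(x)q^{-}\ \leq\ r(x)q^{+}\ \leq\ p^{\star}(x)\qquad \text{for all } x\in\mathbb{R}^N.
$$
Proposition \ref{cpt-emb}(i) then yields the continuous embeddings $W^{1,p(x)}(\mathbb{R}^N)\hookrightarrow L^{r(x)q^{\pm}}(\mathbb{R}^N)$, so $u$ belongs to both of these spaces. By Proposition \ref{norm-prop} this means
$$
\int_{\mathbb{R}^N}|U(x)|^{q^{\pm}}\,dx=\int_{\mathbb{R}^N}|u(x)|^{r(x)q^{\pm}}\,dx<+\infty,
$$
which is exactly $U\in L^{q^{-}}(\mathbb{R}^N)\cap L^{q^{+}}(\mathbb{R}^N)$, giving the first assertion of the corollary.

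With this in hand I would invoke Proposition \ref{HSL-variable} with $h=g=U$ and with the exponents $p$ and $q$ of that proposition both taken equal to $q$. The hypothesis $\tfrac{1}{q(x)}+\tfrac{\lambda(x,y)}{N}+\tfrac{1}{q(y)}=2$ required there is exactly \eqref{symetry}, and the bounds $\|h\|_{L^{p^{\pm}}}\|g\|_{L^{q^{\pm}}}$ become $\|U\|_{L^{q^{\pm}}}^{2}$. The conclusion reads
$$
\left|\int_{\mathbb{R}^N}\int_{\mathbb{R}^N}\frac{|u(x)|^{r(x)}|u(y)|^{r(y)}}{|x-y|^{\lambda(x,y)}}\,dx\,dy\right|\ \leq\ C\bigl(\||u|^{r(x)}\|_{L^{q^{+}}(\mathbb{R}^N)}^{2}+\||u|^{r(y)}\|_{L^{q^{-}}(\mathbb{R}^N)}^{2}\bigr),
$$
which is the estimate of the corollary, with a constant $C$ independent of $u$.

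There is essentially no obstacle: the corollary is a pure bookkeeping consequence of Proposition \ref{HSL-variable} once $U\in L^{q^\pm}(\mathbb{R}^N)$ is checked, and that check is itself a direct consequence of the definition of $\mathcal{M}$ combined with the Sobolev embedding. The only point that requires a moment of care is confirming that the continuous exponents $r(\cdot)q^{\pm}$ remain pointwise inside $[p(\cdot),p^{\star}(\cdot)]$, and this is precisely how $\mathcal{M}$ was designed. The usefulness of packaging the statement in this way will be felt later, when bounding the nonlocal term in the functional $\Psi$.
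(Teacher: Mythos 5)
Your proposal is correct and follows essentially the same route as the paper: establish $U=|u|^{r(\cdot)}\in L^{q^{-}}(\mathbb{R}^N)\cap L^{q^{+}}(\mathbb{R}^N)$ via the Sobolev embedding of Proposition \ref{cpt-emb} (using that $r\in\mathcal{M}$ keeps $r(\cdot)q^{\pm}$ between $p(\cdot)$ and $p^{\star}(\cdot)$), and then apply Proposition \ref{HSL-variable} with $p(\cdot)=q(\cdot)$ and $h=g=U$. Your version merely spells out the intermediate step (identifying $s=rq^{\pm}$ and invoking Proposition \ref{norm-prop} to pass from $u\in L^{r(x)q^{\pm}}(\mathbb{R}^N)$ to finiteness of $\int_{\mathbb{R}^N}|U|^{q^{\pm}}dx$) a bit more explicitly than the paper does.
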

\begin{proof} Using Sobolev embedding (Proposition \ref{cpt-emb}), $ u \in L^{s(x)}(\mathbb{R}^N)$
for all  $s \in  C^{+}(\mathbb{R}^N)$ with  
$$
p(x) \leq s(x) \leq p^{*}(x), \quad \forall x \in \mathbb{R}^N.
$$
Thereby, $U(x)=|u(x)|^{r(x)} \in L^{q^+} \cap L^{q^-}(\mathbb{R}^N)$, because $r \in \mathcal{M}$. Now, we use the Proposition \ref{HSL-variable} with $p(x)=q(x)$ and $h(x)=g(x)=U(x)$ to obtain the desired result. 

\end{proof}

Before continuing our study, we would like point out some important properties of the function $\lambda(x,y)$ given in (\ref{symetry}).
\begin{rmk} \label{RMK1} \mbox{} \\
\noindent $i)$ \, The function $\lambda$ is symmetric, that is,
$$
\lambda(x,y)=\lambda(y,x), \quad \forall x,y \in \mathbb{R}^N.
$$
\noindent ii) \, If $q$ is $\mathbb{Z}^N$-periodic, that is,
$$
q(x+y)=q(x), \quad \forall x \in \mathbb{R}^N \quad \mbox{and} \quad \forall y \in \mathbb{Z}^N,
$$
then $\lambda$ is $\mathbb{Z}^N \times \mathbb{Z}^N$-periodic, that is,
$$
\lambda (x+z,y+w)=\lambda (x,y), \quad \forall x,y \in \mathbb{R}^N \quad \mbox{and} \quad \forall z,w \in \mathbb{Z}^N.
$$
\noindent iii) \, If $q$ is radial, that is,
$$
q(x)=q(|x|), \quad \forall x \in \mathbb{R}^N
$$
then 
$$
\lambda (x,y)=\lambda (|x|,|y|), \quad \forall x,y \in \mathbb{R}^N.
$$
\end{rmk}	

Part $i)$ in Remark \ref{RMK1} will be crucial in the proof of the differentiability of  functional $\Psi$.

\section{Differentiability of the functional $\Psi$.}

In this section, we will study the differentiability of functional $\Psi$ given in (\ref{nonlocal-func}). To this end, we must assume some conditions on $f$. First of all, we fix $q \in C^{+}(\mathbb{R}^N)$ and $\lambda: \mathbb{R}^N \times \mathbb{R}^N \to \mathbb{R}$ satisfying (\ref{symetry}), that is, 
$$
\frac{1}{q(x)} + \frac{\lambda(x,y)}{N}+ \frac{1}{q(y)}=2, \quad \forall x,y \in \mathbb{R}^N.
$$

The function $f: \mathbb{R}^N \times \mathbb{R} \rightarrow \mathbb{R}$ is a  continuous function verifying the following growth condition
$$
|f(x,t)| \leq C_1 (|t|^{r(x)-1} + |t|^{s(x)-1}), \quad \forall  (x,t) \in \mathbb{R}^N \times \mathbb{R} \eqno({f_1})
$$
where $C_1>0$ and $r,s \in \mathcal{M}$ given by (\ref{growth1}).

Note that the function $F(x,t):= \int_{0}^{t}f(x,s) ds $ is continuous and 
$$
|F(x,t)| \leq C_2 (|t|^{r(x)} + |t|^{s(x)}), \quad \forall (x,t ) \in  \mathbb{R}^N \times \mathbb{R} \eqno({F})
$$
for some positive constant $C_2.$

In the proof of the differentiability of $\Psi$ we will use the lemma below whose  proof we omit because it is very simple. 

\begin{lemma} \label{DERIVADA} Let $E$ be a normed vectorial space and $J:E \to \mathbb{R}$ be a functional verifying the following properties:\\
\noindent $i)$ \, The Fr\'echet derivative $\frac{\partial J(u)}{\partial v}:= \lim\limits_{t \rightarrow 0} \frac{J(u + tv) - J(u)}{t}$ exists for all $u, v \in E,$ \\
\noindent $ii)$ \, For each $u \in E$, $\frac{\partial J(u)}{\partial(.)} \in E'$, that is, the application $v \longmapsto \frac{\partial J(u)}{\partial v}$ is a continuous linear functional, \\
\noindent $iii)$ 
$$
u_n \to u \quad \mbox{in} \quad E \Longrightarrow \frac{\partial J(u_n)}{\partial (.)} \to \frac{\partial J(u)}{\partial (.)} \quad \mbox{in} \quad E' 
$$
that is,
$$
u_n \to u \quad \mbox{in} \quad E \Longrightarrow \sup_{\| v\|\leq 1} \left| \frac{\partial J(u_n)}{\partial v} - \frac{\partial J(u)}{\partial v} \right| \rightarrow 0.  
$$  	
Then, $J \in C^{1}(E, \mathbb{R})$ and 
$$
J'(u)v=\frac{\partial J(u)}{\partial v}, \quad \forall u,v \in E.
$$	
\end{lemma}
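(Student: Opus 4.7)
The plan is to use (i) and (ii) to define a natural candidate derivative $T(u):=\partial J(u)/\partial(\cdot)\in E'$ at every $u\in E$, to pass from this Gâteaux-type object to a genuine Fréchet derivative by applying the one-variable mean value theorem to $\varphi(t):=J(u+th)$, and finally to recognize (iii) as precisely the continuity of $u\mapsto T(u)$ from $E$ into $E'$, so that $J\in C^{1}(E,\mathbb{R})$ with $J'(u)=T(u)$.

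The first step is to fix $u,h\in E$ and verify that $\varphi:[0,1]\to\mathbb{R}$ is differentiable throughout $[0,1]$ with $\varphi'(t_0)=T(u+t_0 h)\,h$. This is (i) applied at the base point $u+t_0 h$ in the direction $v=h$: the identity
$$\varphi(t_0+s)-\varphi(t_0)=J(u+t_0h+sh)-J(u+t_0h)$$
divided by $s$ converges as $s\to 0$ to $\partial J(u+t_0 h)/\partial h=T(u+t_0 h)\,h$. In particular $\varphi$ is continuous on $[0,1]$, so the classical mean value theorem produces some $\theta=\theta(u,h)\in(0,1)$ with
$$J(u+h)-J(u)=\varphi(1)-\varphi(0)=T(u+\theta h)\,h.$$

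The second step is the Fréchet remainder estimate: subtracting $T(u)\,h$ and dividing by $\|h\|$,
$$\frac{|J(u+h)-J(u)-T(u)\,h|}{\|h\|}\leq \|T(u+\theta h)-T(u)\|_{E'},$$
and since $\|(u+\theta h)-u\|\leq\|h\|$, sequential continuity of $T$ at $u$, which is exactly (iii), forces the right-hand side to vanish as $\|h\|\to 0$. This gives Fréchet differentiability of $J$ at $u$ with derivative $T(u)$, and (iii) then reads as continuity of $J':E\to E'$, i.e., $J\in C^{1}(E,\mathbb{R})$, with $J'(u)v=\partial J(u)/\partial v$.

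I do not expect any serious obstacle; the three hypotheses match exactly the three ingredients one needs. The only mild point worth attention is that (i) must be invoked at every $t_0\in[0,1]$, including the endpoints, so that $\varphi$ is differentiable on the whole closed interval and the one-variable mean value theorem applies without modification.
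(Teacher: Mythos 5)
The paper gives no proof of this lemma (it is explicitly omitted as ``very simple''), and your proposal supplies exactly the standard argument one would expect: apply the mean value theorem to $\varphi(t)=J(u+th)$ to get $J(u+h)-J(u)=T(u+\theta h)h$, estimate the Fr\'echet remainder by $\|T(u+\theta h)-T(u)\|_{E'}$, and use (iii) (sequential continuity suffices, since $E$ is a normed, hence metric, space and $\|\theta h\|\leq\|h\|\to 0$) to conclude differentiability with $J'(u)=T(u)$ and continuity of $J'$. Your proof is correct and complete, including the endpoint remark ensuring $\varphi$ is differentiable on all of $[0,1]$.
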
	

After the above establishments we are ready to prove the differentiability of functional $\Psi$ given by \eqref{nonlocal-func}. 
\begin{lemma}\label{C1}
The functional $\Psi$ given in \eqref{nonlocal-func} is well defined and  belongs to $C^1(W^{1,p(x)}(\mathbb{R}^N),\mathbb{R})$ with 
$$ 
\Psi'(u)v= \int_{\mathbb{R}^N} \int_{\mathbb{R}^N} \frac{F(x,u(x)f(y,u(y)) v(y))}{|x-y|^{\lambda(x,y)}} dxdy, 
$$
for all $u,v \in W^{1,p(x)}(\mathbb{R}^N).$
\end{lemma}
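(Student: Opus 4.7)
The plan is to verify the three hypotheses of Lemma \ref{DERIVADA}, invoking Proposition \ref{HSL-variable} together with the symmetry $\lambda(x,y)=\lambda(y,x)$ from Remark \ref{RMK1}. \emph{Well-definedness} is immediate: by $(F)$, $|F(x,u(x))|\le C_2(|u|^{r(x)}+|u|^{s(x)})$, and since $r,s\in\mathcal{M}$, Proposition \ref{cpt-emb} yields $|u|^{r(x)},|u|^{s(x)}\in L^{q^+}(\mathbb{R}^N)\cap L^{q^-}(\mathbb{R}^N)$, whence $F(\cdot,u)\in L^{q^+}(\mathbb{R}^N)\cap L^{q^-}(\mathbb{R}^N)$; applying Proposition \ref{HSL-variable} with $h=g=F(\cdot,u)$ and exponent function $p=q$ shows that $\Psi(u)$ is finite.

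For the \emph{G\^ateaux derivative}, I fix $u,v\in W^{1,p(x)}(\mathbb{R}^N)$ and decompose
$$F(x,u+tv)F(y,u+tv)-F(x,u)F(y,u)=\bigl[F(x,u+tv)-F(x,u)\bigr]F(y,u+tv)+F(x,u)\bigl[F(y,u+tv)-F(y,u)\bigr].$$
Dividing by $t$ and using the mean value theorem to rewrite each ratio as $f(x,u+\theta_{x,t}v)v(x)$ with $|\theta_{x,t}|\le|t|$, the growth $(f_1)$ supplies an integrable majorant (by pairing $F(\cdot,u+tv)$ with $(|u|+|v|)^{r(\cdot)-1}|v|+(|u|+|v|)^{s(\cdot)-1}|v|$ in Proposition \ref{HSL-variable}), so dominated convergence passes to the limit $t\to 0$. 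The symmetry of $\lambda$ identifies the two resulting summands and cancels the factor $1/2$, giving
$$\Psi'(u)v=\int_{\mathbb{R}^N}\int_{\mathbb{R}^N}\frac{F(x,u(x))f(y,u(y))v(y)}{|x-y|^{\lambda(x,y)}}\,dxdy.$$

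It remains to verify linearity and continuity. Linearity in $v$ is clear, and Proposition \ref{HSL-variable} with $h=F(\cdot,u)$, $g=f(\cdot,u)v$, combined with a variable-exponent H\"older estimate bounding $\|f(\cdot,u)v\|_{L^{q^\pm}(\mathbb{R}^N)}$ by $C(\|u\|)\|v\|_{W^{1,p(x)}(\mathbb{R}^N)}$, yields $|\Psi'(u)v|\le C(\|u\|)\|v\|_{W^{1,p(x)}(\mathbb{R}^N)}$. For continuity of $u\mapsto\Psi'(u)$, given $u_n\to u$ in $W^{1,p(x)}(\mathbb{R}^N)$, I decompose
$$\Psi'(u_n)v-\Psi'(u)v=\int_{\mathbb{R}^N}\int_{\mathbb{R}^N}\frac{[F(x,u_n)-F(x,u)]f(y,u_n)v(y)}{|x-y|^{\lambda(x,y)}}\,dxdy+\int_{\mathbb{R}^N}\int_{\mathbb{R}^N}\frac{F(x,u)[f(y,u_n)-f(y,u)]v(y)}{|x-y|^{\lambda(x,y)}}\,dxdy,$$
bound each double integral via Proposition \ref{HSL-variable}, and invoke continuity of the Nemytskii operators $u\mapsto F(\cdot,u)$ and $u\mapsto f(\cdot,u)$ into $L^{q^\pm}(\mathbb{R}^N)$. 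The hard step is this last one: obtaining the Nemytskii bound linear in $\|v\|$ and uniform along $(u_n)$ requires a delicate use of the variable-exponent H\"older inequality together with the admissibility chain $p(x)\le r(x)q^\pm,\,s(x)q^\pm\le p^\star(x)$ built into $\mathcal{M}$, and may require passing to an a.e.\ convergent subsequence to close the argument.
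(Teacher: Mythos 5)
Your overall architecture matches the paper's: verify the three hypotheses of Lemma \ref{DERIVADA}, use the mean value theorem together with the symmetry of $\lambda$ (Remark \ref{RMK1}) and Fubini to identify the two summands and cancel the factor $1/2$, and control all double integrals with Proposition \ref{HSL-variable}. Well-definedness, the existence of the G\^ateaux derivative, and the boundedness and linearity of $v\mapsto \frac{\partial\Psi(u)}{\partial v}$ are handled essentially as in the paper.

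The genuine gap is in hypothesis (iii) of Lemma \ref{DERIVADA}, which is the heart of the paper's proof. After your decomposition of $\Psi'(u_n)v-\Psi'(u)v$, the second term requires
$$\sup_{\|v\|_{W^{1,p(x)}(\mathbb{R}^N)}\le 1}\|(f(\cdot,u_n)-f(\cdot,u))v\|_{L^{q^{\pm}}(\mathbb{R}^N)}\longrightarrow 0,$$
uniformly over the unit ball of $v$. Your appeal to ``continuity of the Nemytskii operator $u\mapsto f(\cdot,u)$ into $L^{q^{\pm}}$'' does not deliver this: under $(f_1)$ the function $f(\cdot,u)$ need not belong to $L^{q^{\pm}}(\mathbb{R}^N)$ at all (its growth is $|u|^{r(x)-1}+|u|^{s(x)-1}$, so after H\"older against $|v|^{q^{\pm}}\in L^{r(x)}(\mathbb{R}^N)$ the relevant targets for $|f(\cdot,u_n)-f(\cdot,u)|^{q^{\pm}}$ are spaces of the type $L^{\frac{r(x)}{r(x)-1}}$ and $L^{\frac{s(x)}{s(x)-1}}$), and the convergence must be obtained on all of $\mathbb{R}^N$, where only local compactness of the embeddings is available. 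The paper closes precisely this point by an $\varepsilon$--$R$ splitting: using $u_n\to u$ in $L^{q^{\pm}r(x)}(\mathbb{R}^N)$ and $L^{q^{\pm}s(x)}(\mathbb{R}^N)$ to make the contribution of $B(0,R)^c$ small uniformly in $n$ and in $v$ with $\|v\|\le 1$, and then dominated convergence on the bounded ball $B(0,R)$, where $\||v|^{q^{+}}\|_{L^{r(x)}(B(0,R))}$ is uniformly bounded. Your alternative idea of passing to an a.e.\ convergent subsequence could be made to work (extract from the norm convergence a dominating function in $L^{q^{\pm}r(x)}\cap L^{q^{\pm}s(x)}$, apply dominated convergence to $|f(\cdot,u_n)-f(\cdot,u)|^{q^{\pm}}$ in $L^{\frac{r(x)}{r(x)-1}}$, and recover the full sequence by the subsequence principle), but as written this step is only flagged, not carried out, so condition (iii) of Lemma \ref{DERIVADA} --- and hence $\Psi\in C^{1}(W^{1,p(x)}(\mathbb{R}^N),\mathbb{R})$ --- is not established.
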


\begin{proof} From $(f_1), (F)$ and Proposition \ref{HSL-variable} it follows that $\Psi$ is well defined. In the sequel, we will show that $\Psi$ satisfies the assumptions of Lemma  \ref{DERIVADA}. To this end, we will divide the proof into three steps: \\
	
\noindent \textbf{Step 1:} Existence of the Fr\'echet derivative:	\\

Let $u,v \in W^{1,p(x)}(\mathbb{R}^N)$ and $t \in [-1,1]$. Note that
\begin{equation}\label{integrand}
\frac{\Psi(u+tv) - \Psi(u)}{t} = \frac{1}{2} \int_{\mathbb{R}^N} \int_{\mathbb{R}^N} \frac{F(x,u(x) + tv(x))F(y,u(y) + tv(y)) - F(x,u(x))F(y,u(y))}{t |x-y|^{\lambda(x,y)}} dx dy. 
\end{equation}
Denoting by $I$ the integrand in \eqref{integrand}, we have
$$
\begin{array}{l}
\displaystyle I= \frac{F(x,u(x)+tv(x))(F(y,u(y)+tv(y)) - F(y,u(y))}{t} \\
\mbox{}\\
\displaystyle + \frac{F(y,u(y))(F(x,u(x)+tv(x)) - F(x,u(x)))}{t}.
\end{array}
$$
By the Mean Value Theorem there exist $\theta(x,t), \eta(y,t)\in [0,1]$, such that
$$
F(y,u(y) + tv(y)) - F(y,u(y)) = f(y,u(y) + \eta(y,t)tv(y))v(y)t 
$$
and
$$
F(x,u(x) + tv(x)) - F(x,u(x)) =  f(x,u(x) + \theta(x,t)tv(x))v(x)t.
$$
The relation  \eqref{integrand} allows us to estimate
$$
\left|\frac{\Psi(u+tv) - \Psi(u)}{t} - \int_{\mathbb{R}^N} \int_{\mathbb{R}^N} \frac{F(x,u(x)f(y,u(y)) v(y))}{|x-y|^{\lambda(x,y)}} dxdy \right|  \leq |B^{t}_{1}|+|B^{t}_{2}| 
$$
where
\begin{align*}B^{t}_{1} &: =  \frac{1}{2} \int_{\mathbb{R}^N} \int_{\mathbb{R}^N} \frac{F(x,u(x)+tv(x)) f(y,u(y) + \eta(y,t)tv(y))v(y) - F(x,u(x)f(y,u(y))v(y))}{|x-y|^{\lambda(x,y)}} dx dy
\end{align*}
and
\begin{align*}B^{t}_{2} &: =  \frac{1}{2} \int_{\mathbb{R}^N} \int_{\mathbb{R}^N} \frac{F(y,u(y))f(x,u(x) + \theta(x,t)tv(x))v(x)}{|x-y|^{\lambda(x,y)}} dx dy - \frac{1}{2} \int_{\mathbb{R}^N} \int_{\mathbb{R}^N} \frac{F(x,u(x))f(y,u(y))v(y)}{|x-y|^{\lambda(x,y)}} dx dy
\\
&
\end{align*}
By Remark \ref{RMK1}, the function $\lambda(x,y)$ is symmetric, that is,
$$
\lambda(x,y)=\lambda(y,x), \quad \forall x,y \in \mathbb{R}^N.
$$
Such property combined with Fubini's Theorem implies that
\begin{align*}
\int_{\mathbb{R}^N} \int_{\mathbb{R}^N} \frac{F(x,u(x))f(y,u(y))v(y)}{|x-y|^{\lambda(x,y)}} dx dy & = \int_{\mathbb{R}^N} \int_{\mathbb{R}^N} \frac{F(y,u(y))f(x,u(x))v(x)}{|x-y|^{\lambda(y,x)}} dy dx \\
&=\int_{\mathbb{R}^N} \int_{\mathbb{R}^N}\frac{F(y,u(y))f(x,u(x))v(x)}{|x-y|^{\lambda(y,x)}} dx dy  \\ 
&=\int_{\mathbb{R}^N}\int_{\mathbb{R}^N}\frac{F(y,u(y))f(x,u(x))v(x)}{|x-y|^{\lambda(x,y)}} dx dy.
\end{align*}
Therefore, $B^{t}_{2}$ can be rewritten as
$$
B^{t}_{2}:= \frac{1}{2} \int_{\mathbb{R}^N} \int_{\mathbb{R}^N} \frac{F(y,u(y))f(x,u(x) + \theta(x,t)tv(x))v(x) - F(y,u(y)f(x,u(x))v(x))}{|x-y|^{\lambda(x,y)}} dxdy. 
$$
Then, by Proposition \ref{HSL-variable} 
\begin{align*}
|B^{t}_{2}| &\leq C\| F(.,u)\|_{L^{q^{+}}(\mathbb{R}^N)} \| f(., u + \theta(.,t)v)v-f(.,u)v\|_{L^{q^{+}}(\mathbb{R}^N)}\\
&+C\| F(.,u)\|_{L^{q^{-}}(\mathbb{R}^N)} \| f(., u + \theta(.,t)v)v-f(.,u)v\|_{L^{q^{-}}(\mathbb{R}^N)}.
\end{align*}
Since $\theta(x,t) \in [0,1]$ and $t \in [-1,1]$, the condition $(f_1)$ guarantees that
\begin{align}\label{uniform}
|f(x, u(x) + \theta(t,x)tv(x))v(x) - f(x,u(x))v(x)|^{q^{+}} \leq C(|u(x)|^{q^{+}(r(x)-1)}|v(x)|^{q^{+}} + |v(x)|^{q^{+}r(x)}) \nonumber \\
+ C(|u(x)|^{q^{+}(s(x)-1)}|v(x)|^{q^{+}} + |u(x)|^{q^{+}s(x)} + |u(x)|^{q^{+}(r(x)-1)}|v(x)|^{q^{+}} + |u(x)|^{q^{+}(s(x)-1)}|v(x)|^{q^{+}}) 
\end{align}
The growth conditions \eqref{growth1} and Proposition  \ref{cpt-emb} ensure that the right side of the inequality \eqref{uniform} is an integrable function. Thus, the Lebesgue's Dominated Convergence Theorem  gives 
$$
\| f(., u + \theta(.,t)v)v-f(.,u)v\|_{L^{q^{+}}(\mathbb{R}^N)} \rightarrow 0 \ as \ t\rightarrow 0 .
$$
Likewise
$$
\| f(., u + \theta(.,t)tv)v-f(.,u)v\|_{L^{q^{-}}(\mathbb{R}^N)} \rightarrow 0 \ as \ t\rightarrow 0.
$$
The last limits imply that $B^{t}_{2} \rightarrow 0$ as  $t\rightarrow 0.$  Related to the $B_{1}^{t}$, we have the estimate below 
\begin{align*}
|B^{t}_{1}|&\leq \frac{1}{2} \int_{\mathbb{R}^N} \int_{\mathbb{R}^N} \frac{|F(x,u(x))| |f(y,u(y) + \eta(y,t)tv(y))v(y) - f(y,v(y))v(y)|}{|x-y|^{\lambda(x,y)}} dx dy \\
&+ \frac{1}{2} \int_{\mathbb{R}^N} \int_{\mathbb{R}^N} \frac{|f(y,u(y) + \eta(y,t)tv(y))v(y)| |F(x,u(x) + t v(x)) - F(x,u(x))|}{|x-y|^{\lambda(x,y)}} dx dy.
\end{align*}
Arguing as above,   
$$ \int_{\mathbb{R}^N} \int_{\mathbb{R}^N} \frac{|F(x,u(x))| |f(y,u(y) + \eta(y,t)tv(y))v(y) - f(y,v(y))v(y)|}{|x-y|^{\lambda(x,y)}} dx dy \rightarrow 0$$
as $t \rightarrow 0.$ On the other hand, the Lebesgue's Dominated Convergence Theorem also yields 
\begin{equation}\label{F1}
\| F(.,u + tv) - F(.,u)\|_{L^{q^{+}}(\mathbb{R}^N)} \rightarrow 0 \ \text{as} \ t\rightarrow 0
\end{equation}
and
\begin{equation}\label{F2}
\| F(.,u + tv) -F(.,u)\|_{L^{q^{-}}(\mathbb{R}^N)} \rightarrow 0 \ \text{as} \ t\rightarrow 0.
\end{equation}
As in \eqref{uniform}, the quantities $\| f(.,u + \eta(.,t)tv)v\|_{L^{q^{+}}(\mathbb{R}^N)}$ and  $\| f(.,u + \eta(.,t)tv)v\|_{L^{q^{-}}(\mathbb{R}^N)}$ are uniformly bounded by a constant that does not depend on $t \in [-1,1]$. Thus, the Proposition \ref{HSL-variable} combined with \eqref{F1} and \eqref{F2}  gives 
$$ 
\int_{\mathbb{R}^N} \int_{\mathbb{R}^N} \frac{|f(y,u(y) + \eta(y,t)tv(y))v(y)| |F(x,u(x)+tv(x)) - F(x,u(x))|}{|x-y|^{\lambda(x,y)}} dx dy \rightarrow 0
$$
as $t \rightarrow 0,$ and so, $ B^{t}_{1} \rightarrow 0$ as $t \rightarrow 0.$
From the above analysis, 
$$
 \lim_{t \rightarrow 0} \frac{\Psi(u+tv) - \Psi(u)}{t} =  \int_{\mathbb{R}^N} \int_{\mathbb{R}^N} \frac{F(x,u(x)f(y,u(y))v(y))}{|x-y|^{\lambda(x,y)}} dx dy, 
$$
showing the existence the existence of the Fr\'echet derivative $\frac{\partial \Psi(u)}{\partial v}.$ \\

\noindent \textbf{Step 2:} $\frac{\partial \Psi(u)}{\partial (.)} \in (W^{1,p(x)}(\mathbb{R}^N))'$ for all $u \in W^{1,p(x)}(\mathbb{R}^N)$ . \\

It is evident that $\frac{\partial \Psi(u)}{\partial v}$ is linear at $v$ for each $u$ fixed.  Next, we are going to show that 
$$
\left| \frac{\partial \Psi(u)}{\partial v}\right| \leq C_u \| v\|, \quad \forall v \in W^{1,p(x)}(\mathbb{R}^N),
$$ 
for some positive constant $C_u$ that does not depend on $v \in W^{1,p(x)}(\mathbb{R}^N)$. From $(f_1),(F)$ and Proposition \ref{HSL-variable}   
\begin{equation}\label{continuous1}
\begin{aligned}
\left|\int_{\mathbb{R}^N} \int_{\mathbb{R}^N} \frac{F(x,u(x))f(y,u(y))v(y)}{|x-y|^{\lambda(x,y)}} dx dy\right| &\leq C \| F(.,u)\|_{L^{q^{+}}(\mathbb{R}^N)}\| f(.,u)v\|_{L^{q^{+}}(\mathbb{R}^N)}   \\
& + C \| F(.,u)\|_{L^{q^{-}}(\mathbb{R}^N)}\| f(.,u)v\|_{L^{q^{-}}(\mathbb{R}^N)} .
\end{aligned}
\end{equation}
Suppose that $\|v\|_{W^{1,p(x)}(\mathbb{R}^N)} \leq 1$.  The continuous embeddings   ${W^{1,p(x)}(\mathbb{R}^N)} \hookrightarrow L^{p^{+}r(x)}(\mathbb{R}^N)$ and ${W^{1,p(x)}(\mathbb{R}^N)} \hookrightarrow L^{p^{+}s(x)}(\mathbb{R}^n)$ (see Proposition \ref{cpt-emb}) combined with H\"{o}lder inequality, $(f_1)$ and Proposition \ref{norm-prop} give
\begin{align*}
\int_{\mathbb{R}^N} |f(y,u(y))v(y)|^{q^{+}} dy & \leq C \| |u|^{q^{+}(r(.)-1)}\|_{L^{\frac{r(y)}{r(y)-1}}(\mathbb{R}^N)}  \| |v|^{q^{+}}\|_{L^{r(y)}(\mathbb{R}^N)}\\
&+ C \||u|^{q^{+}(s(.)-1)}\|_{L^{\frac{s(y)}{s(y)-1}}(\mathbb{R}^N)} \| |v|^{q^{+}}\|_{L^{s(x)}(\mathbb{R}^N)}
\end{align*} 
\begin{equation}\label{continuous2}
\begin{aligned}
&\leq C_u   \left(  \max  \left( \| v\|^{q^{+}}_{L^{q^{+}r(x)}(\mathbb{R}^N)}  , \| v\|^{\frac{q^{+}r^{-}}{r^{+}}}_{L^{q^{+}r(x)}(\mathbb{R}^N)}  \right) + \max \left( \| v\|^{\frac{q^{+}r^{+}}{r^{-}}}_{L^{q^{+}r(x)}(\mathbb{R}^N)}, \| v\|^{q^{+}}_{L^{q^{+}r(x)}(\mathbb{R}^N)}  \right)\right) \\
&+ C_u   \left(  \max  \left( \| v\|^{q^{+}}_{L^{q^{+}s(x)}(\mathbb{R}^N)}  , \| v\|^{\frac{q^{+}s^{-}}{s^{+}}}_{L^{q^{+}s(x)}(\mathbb{R}^N)}  \right) + \max \left( \| v\|^{\frac{q^{+}s^{+}}{s^{-}}}_{L^{q^{+}s(x)}(\mathbb{R}^N)}, \| v\|^{q^{+}}_{L^{q^{+}s(x)}(\mathbb{R}^N)}  \right)\right) \\
& \leq C_{u_1}
\end{aligned}
\end{equation}
where
\begin{align*}
C_{u_1}&:=K_1 \left( \max \left( \left( \int_{\mathbb{R}^N} |u(y)|^{q^{+}r(y)} dy \right)^{\frac{1}{\left(\frac{r}{r-1}\right)^+}}, \left( \int_{\mathbb{R}^N} |u(y)|^{q^{+}s(y)} dy \right)^{\frac{1}{\left(\frac{r}{r-1}\right)^-}}  \right)\right) \\
&+ K_1 \left( \max \left( \left( \int_{\mathbb{R}^N} |u(y)|^{q^{+}s(y)} dy \right)^{\frac{1}{\left(\frac{s}{s-1}\right)^+}}, \left( \int_{\mathbb{R}^N} |u(y)|^{q^{+}s(y)} dy \right)^{\frac{1}{\left(\frac{s}{s-1}\right)^+}}  \right)\right)
\end{align*}
and $K_1$ is a constant that does not depend on $u$ and $v.$ The previous argument also implies that, 
\begin{equation}\label{continuous3}
\| f(.,u)v\|_{L^{q^{-}}(\mathbb{R}^N)} \leq C_{u_2}, \ \text{for all} \ v \in W^{1,p(x)}(\mathbb{R}^N) \ \text{with} \ \|v \|_{W^{1,p(x)}(\mathbb{R}^N)} \leq 1
\end{equation}
where
\begin{align*}
C_{u_2}&:=K_2 \left( \max \left( \left( \int_{\mathbb{R}^N} |u(y)|^{q^{-}r(y)} dy \right)^{\frac{1}{\left( \frac{r}{r-1}\right)^+}}, \left( \int_{\mathbb{R}^N} |u(y)|^{q^{-}s(y)} dy \right)^{\frac{1}{\left( \frac{r}{r-1}\right)^-}}   \right)\right) \\
&+ K_2 \left( \max \left( \left( \int_{\mathbb{R}^N} |u(y)|^{q^{-}s(y)} dy \right)^{\frac{1}{\left( \frac{s}{s-1}\right)^+}}, \left( \int_{\mathbb{R}^N} |u(y)|^{q^{-}s(y)} dy \right)^{\frac{1}{\left( \frac{s}{s-1}\right)^+}}   \right)\right)
\end{align*}
with $K_2$ being a constant that does not depend on $u$ and $v.$
The inequalities \eqref{continuous1},\eqref{continuous2} and \eqref{continuous3} justify the Step 2. \\

\noindent \textbf{Step 3:} 
\begin{equation} \label{B0}
u_n \to  u \quad \mbox{in} \quad W^{1,p(x)}(\mathbb{R}^N) \Rightarrow \sup_{\| v\|_{W^{1,p(x)}(\mathbb{R}^N)}\leq 1} \left| \frac{\partial \Psi(u_n)}{\partial v} - \frac{\partial \Psi(u)}{\partial v} \right| \rightarrow 0. 
\end{equation}
Consider $v \in W^{1,p(x)}(\mathbb{R}^N)$ with $\| v\|_{W^{1,p(x)}(\mathbb{R}^N)} \leq 1$ and note that  
\begin{align*}
\left| \frac{\partial \Psi(u_n)}{\partial v} - \frac{\partial \Psi(u)}{\partial v} \right| & \leq \int_{\mathbb{R}^N} \int_{\mathbb{R}^N} \frac{|F(x,u_n(x)) - F(x,u(x))| |f(y,u_n(y))v(y)|}{|x-y|^{\lambda(x,y)}} dxdy \\
&+ \int_{\mathbb{R}^N} \int_{\mathbb{R}^N} \frac{|F(x,u(x))| |f(y,u_n(y))v(y)-f(y,u(y))v(y)|}{|x-y|^{\lambda(x,y)}} dxdy \\
&:=B^{n}_{f} + B^{n}_{F}
\end{align*}
By Proposition \ref{HSL-variable}, 
\begin{align*}
 B^{n}_{f} & \leq C \| F(.,u_n) - F(.,u))\|_{L^{q^{+}}(\mathbb{R}^N)} \| f(.,u_n)v\|_{L^{q^{+}}(\mathbb{R}^N)} \\
 & +C  \| F(.,u_n) - F(.,u))\|_{L^{q^{-}}(\mathbb{R}^N)} \| f(.,u_n)v\|_{L^{q^{-}}(\mathbb{R}^N)}.
\end{align*}
Since the sequences $(\| f(.,u_n)v\|_{L^{q^{+}}(\mathbb{R}^N)})$ and $(\| f(.,u_n)v\|_{L^{q^{-}}(\mathbb{R}^N)})$ are bounded (see \eqref{continuous2} and \eqref{continuous3}) and   
$$
\| F(.,u_n) - F(.,u)\|_{L^{q^{+}}(\mathbb{R}^N)} , \| F(.,u_n) - F(.,u)\|_{L^{q^{-}}(\mathbb{R}^N)} \to 0,
$$
it follows that
\begin{equation}\label{Sup1}
\displaystyle\sup_{\substack{v \in W^{1,p(x)}(\mathbb{R}^N) \\ \| v\|_{W^{1,p(x)}(\mathbb{R}^N) }\leq 1}} \| F(.,u_n) - F(.,u))\|_{L^{q^{+}}(\mathbb{R}^N)} \| f(.,u_n)v\|_{L^{q^{+}}(\mathbb{R}^N)} \to 0
\end{equation}
and
\begin{equation}\label{Sup2}
\displaystyle\sup_{\substack{v \in W^{1,p(x)}(\mathbb{R}^N) \\ \| v\|_{W^{1,p(x)}(\mathbb{R}^N) }\leq 1}} \| F(.,u_n) - F(.,u))\|_{L^{q^{-}}(\mathbb{R}^N)} \| f(.,u_n)v\|_{L^{q^{-}}(\mathbb{R}^N)} \to 0
\end{equation}
as $n \rightarrow + \infty.$

Now we will estimate $B^{n}_{F}.$  Given $\varepsilon >0$,  fix $R>0$ large enough such that 
$$
\int_{B(0,R)^c} |u(x)|^{q^{+}r(x)} dx, \int_{B(0,R)^c} |u(x)|^{q^{-}r(x)} dx, \int_{B(0,R)^c} |u(x)|^{q^{+}s(x)} dx, \int_{B(0,R)^c} |u(x)|^{q^{-}s(x)} dx < \varepsilon .
$$
Recalling that $u_n \to u$ in $L^{q^{+}r(x)}(\mathbb{R}^N),L^{q^{-}r(x)}(\mathbb{R}^N),L^{q^{+}s(x)}(\mathbb{R}^N)$ and $L^{q^{-}s(x)}(\mathbb{R}^N)$, there is $n_0 \in \mathbb{N}$ large enough such that  
\begin{equation}\label{aux-epsilon}
\int_{B(0,R)^c} |u_n(x)|^{q^{+}r(x)} dx, \int_{B(0,R)^c} |u_n(x)|^{q^{-}r(x)} dx, \int_{B(0,R)^c} |u_n(x)|^{q^{+}s(x)} dx, \int_{B(0,R)^c} |u_n(x)|^{q^{-}s(x)} dx < \varepsilon 
\end{equation}
for all $n \geq n_0.$ Note that by Proposition \ref{HSL-variable}
\begin{equation}\label{B-F}
\begin{aligned}
\int_{\mathbb{R}^N}\int_{\mathbb{R}^N} \frac{|F(x,u(x))| |(f(y,u_n(y)) - f(y,u(y)))v(y)|}{|x-y|^{\lambda(x,y)}} dx dy  \leq C\| F(.,u)\|_{L^{q^{+}}(\mathbb{R}^N)} \\
\times\|(f(.,u_n) - f(.,u))v \|_{L^{q^{+}}(\mathbb{R}^N)} \\
+ C\| F(.,u)\|_{L^{q^{-}}(\mathbb{R}^N)} 
\|(f(.,u_n) - f(.,u))v \|_{L^{q^{-}}(\mathbb{R}^N)} \\
\leq C_0(\|(f(.,u_n) - f(.,u))v \|_{L^{q^{+}}(\mathbb{R}^N)} + \|(f(.,u_n) - f(.,u))v \|_{L^{q^{-}}(\mathbb{R}^N)})
\end{aligned}
\end{equation}
where $C_0>0$ is a constant that does not depend on $n \in \mathbb{N}.$ The condition $(f_1)$ together with  H\"{o}lder's inequality yield 
\begin{equation}\label{aux-rc}
\begin{aligned}
&\int_{B(0,R)^c} |(f(y,u_n(y)) - f(x,u(x)))v(y)|^{q^+} dy \leq
 C_1 \| |u_n|^{q^{+}(r(y)-1)}\|_{L^{\frac{r(y)}{r(y)-1}}(B(0,R)^c)} \| |v|^{q^{+}}\|_{L^{r(y)}(B(0,R)^c) } \\
&+ C_1\| |u|^{q^{+}(r(y)-1)}\|_{L^{\frac{r(y)}{r(y)-1}}(B(0,R)^c)} \| |v|^{q^{+}}\|_{L^{r(y)}(B(0,R)^c)} + C_1\| |u_n|^{q^{+}(s(y)-1)}\|_{L^{\frac{s(y)}{s(y)-1}}(B(0,R)^c)}\| |v|^{q^{+}}\|_{L^{s(y)}(B(0,R)^c)}.
\end{aligned}
\end{equation}
By Propositions   \ref{norm-prop} and \ref{cpt-emb}, we know that $\| |v|^{q^{+}}\|_{L^{r(y)}(B(0,R)^c)}, \| |v|^{q^{+}}\|_{L^{s(y)}(B(0,R)^c)} \leq C_2$  where $C_2$ is a positive constant that does not depend on $v \in W^{1,p(x)}(\mathbb{R}^N)$ with $\| v\|_{W^{1,p(x)}(\mathbb{R}^N)} \leq 1$ and $R>0$. Thus, from \eqref{aux-epsilon}, \eqref{aux-rc} and Proposition 2.3 
$$\int_{B(0,R)^c} |(f(y,u_n(y)) - f(x,u(x)))v(y)|^{q^+} dy \leq C_2 \max \left\{\varepsilon^{\frac{1}{\left(\frac{r}{r-1}\right)^+}}, \varepsilon^{\frac{1}{\left(\frac{r}{r-1}\right)^-}}, \varepsilon^{\frac{1}{\left(\frac{s}{s-1}\right)^+}}, \varepsilon^{\frac{1}{\left(\frac{s}{s-1}\right)^-}}\right\}.$$
Therefore
\begin{equation}\label{sup1}
\displaystyle\sup_{\substack{v \in W^{1,p(x)}(\mathbb{R}^N) \\ \| v\|_{W^{1,p(x)}(\mathbb{R}^N) }\leq 1}}  \int_{B(0,R)^c} |(f(y,u_n(y)) - f(x,u(y)))v(y)|^{q^+} dy \leq A_{\varepsilon}, \quad \forall n \geq n_0. 
\end{equation}
where
$$
A_{\varepsilon}=C_2 \max \left\{\varepsilon^{\frac{1}{\left(\frac{r}{r-1}\right)^+}}, \varepsilon^{\frac{1}{\left(\frac{r}{r-1}\right)^-}}, \varepsilon^{\frac{1}{\left(\frac{s}{s-1}\right)^+}}, \varepsilon^{\frac{1}{\left(\frac{s}{s-1}\right)^-}}\right\} \to 0 \quad \mbox{as} \quad \varepsilon \to 0.
$$

Now we will estimate the integral 
$$
\int_{B(0,R)} |f(y,u_n (y)) - f(y,u(y))|^{q^+}|v(y)|^{q^+} dy .
$$
Using Proposition \ref{HSL-variable} and the continuous embedding $W^{1,p(x)}(B(0,R)) \hookrightarrow L^{q^{+}r(y)}(B(0,R))$, we get
\begin{equation*}
\begin{aligned}
\int_{B(0,R)} |f(y,u_n(y)) - f(y,u(y))|^{q^{+}} |v(y)|^{q^{+}} & \leq C \| |f(.,u_n) - f(.,u)|^{q^{+}}\|_{L^{\frac{r(y)}{r(y)-1}}(B(0,R) )}\\
& \times \| |v|^{q^{+}}\|_{L^{r(y)}(B(0,R))}\\
& \leq C_3 \||f(.,u_n) - f(.,u)|^{q^{+}} \|_{L^{\frac{r(y)}{r(y)-1}}(B(0,R))}
\end{aligned}
\end{equation*}
where $C_3$ is a positive constant that does not depend on $n \in \mathbb{N}$ and $v \in W^{1,p(x)}(\mathbb{R}^N)$ with $\| v\|_{W^{1,p(x)}(\mathbb{R}^N)} \leq 1.$ Recalling that $|f(y,u_n(y) - f(y,u(y)))|^{q^{+}} \rightarrow 0$ in $L^{\frac{r(y)}{r(y)-1}}(B(0,R))$, it follows that 
\begin{equation}\label{sup2}
\displaystyle\sup_{\substack{v \in W^{1,p(x)}(\mathbb{R}^N) \\ \| v\|_{W^{1,p(x)}(\mathbb{R}^N) }\leq 1}}  \int_{B(0,R)} |(f(y,u_n(y)) - f(x,u(y)))v(y)|^{q^+} dy \rightarrow 0 \ \text{as} \ n \rightarrow + \infty.
\end{equation}
From $\eqref{sup1}$ and $\eqref{sup2}$, 
\begin{equation}\label{sup3}
\displaystyle\sup_{\substack{v \in W^{1,p(x)}(\mathbb{R}^N) \\ \| v\|_{W^{1,p(x)}(\mathbb{R}^N) }\leq 1}} \|(f(.,u_n) - f(.,u))v \|_{L^{q^{+}}(\mathbb{R}^N)} \rightarrow 0 \  \text{as} \ n \rightarrow + \infty .
\end{equation}
Likewise
\begin{equation}\label{sup4}
\displaystyle\sup_{\substack{v \in W^{1,p(x)}(\mathbb{R}^N) \\ \| v\|_{W^{1,p(x)}(\mathbb{R}^N) }\leq 1}} \|(f(.,u_n) - f(.,u))v \|_{L^{q^{-}}(\mathbb{R}^N)} \rightarrow 0 \  \text{as} \ n \rightarrow + \infty .
\end{equation}
From \eqref{B-F},\eqref{sup3} and \eqref{sup4},  

\begin{equation}\label{sup5}
\displaystyle\sup_{\substack{v \in W^{1,p(x)}(\mathbb{R}^N) \\ \| v\|_{W^{1,p(x)}(\mathbb{R}^N) }\leq 1}}\int_{\mathbb{R}^N}\int_{\mathbb{R}^N} \frac{|F(x,u(x))| |(f(y,u_n(y)) - f(y,u(y)))v(y)|}{|x-y|^{\lambda(x,y)}} dx dy \to 0
\end{equation}
as $n \rightarrow +\infty.$ The step is justified, according to \eqref{Sup1}, \eqref{Sup2} and \eqref{sup5}. Finally, the lemma follows from the previous three steps.

\end{proof}

\section{An application} 

In this section we will illustrate how we can use  Proposition \ref{HSL-variable} to prove the existence of solution for (\ref{Choquard-eq}). In what follows, we will consider  the condition $(f_1)$ with $r,s$ verifying
\begin{equation} \label{NOVAEQ}
p \ll rq^{-} \leq rq^{+} \ll p^{\star}, \,\, p \ll sq^{-} \leq sq^{+} \ll p^{\star} 
\end{equation}
and
\begin{equation} \label{growth3.1}
r^{-},s^{-} > {p^{+}}/{2}.
\end{equation}
Moreover, we also consider the  Ambrosetti-Rabinowitz type condition:
$$
0<\theta F(x,t)  \leq 2f(x,t)t, \quad \forall  t>0  \eqno{(f_2)}
$$
Here $\theta>0$ is a fixed number with $\theta > p^{+}$  and we will suppose that there are constants $l,c_l>0$ such that  
$$
F(x,l) \geq c_l,  \quad  \forall x \in \mathbb{R}^N.
$$

Related to the potential $V: \mathbb{R}^N \rightarrow \mathbb{R}$, we assume that    
$$
\displaystyle\inf_{x \in \mathbb{R}^N} V(x):=V_0 >0  \eqno{(V_0)}
$$
and one of the following conditions: 

\noindent $(V_1)$ $V$ is $\mathbb{Z}^N$-periodic \\
\noindent or \\
\noindent $(V_2)$ $V$ has the property that the quantity
$$
\| u\|_{\star} = \| \nabla u\|_{L^{p(x)}(\mathbb{R}^N)} + \| u\|_{L^{p(x)}(\mathbb{R}^N),V(x)}
$$
where
$$
\| u\|_{L^{p(x)}(\mathbb{R}^N),V(x)}= \inf\left\{\alpha >0 ; \int_{\mathbb{R}^N} V(x) \left| \frac{u}{\alpha}\right| \leq 1 \right\}, 
$$
defines a norm in $C^{\infty}_{0}(\mathbb{R}^N)$ and that the completion of $C^{\infty}_{0}(\mathbb{R}^N)$ with relation this norm, denote by $E$, is a Banach space with the embedding $E \hookrightarrow L^{q(x)}(\mathbb{R}^N)$ compact for all $q \in C^{+}(\mathbb{R}^N)$ and $p \ll q \ll p^{\star}$ in $\mathbb{R}^N .$

Note that if we consider the conditions $(V_1)$ and $(V_2)$, the same arguments of Lemma \eqref{C1} work well to prove that  $\Psi$ given by \eqref{nonlocal-func} belongs to $C^1(E,\mathbb{R}).$

We would like point out that the condition $(V_2)$ holds if the potential $V$ is coercive, that is 
$$
V(x) \to +\infty \quad \mbox{as} \quad |x| \to +\infty,
$$	
see for instance \cite{Alves3}.

The main result of this section is the following:

\begin{theorem} \label{existence} Assume $(f_1)-(f_2)$, $(V_0),$ (\ref{symetry}),  (\ref{NOVAEQ})- (\ref{growth3.1}) and that $p$ is a Lipschitz functions. If  \\
\noindent $i)$ \,  $(V_1)$ holds,  $p,q$ are $\mathbb{Z}^N$-periodic functions and $f(.,t)$ is a  $\mathbb{Z}^N$-periodic function for each $t \in \mathbb{R}$,\\
\noindent or \\
\noindent $ii)$  $(V_2)$ holds, \\
\noindent then problem (\ref{Choquard-eq}) has a nontrivial solution. 
\end{theorem}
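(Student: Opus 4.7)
The strategy is to produce a nontrivial critical point of $J$ by the Mountain Pass Theorem in the appropriate ambient space $X$---$W^{1,p(x)}(\mathbb{R}^N)$ under hypothesis (i) and $E$ under hypothesis (ii). That $J \in C^1(X,\mathbb{R})$ follows from Lemma \ref{C1} (whose proof extends to $E$, as noted in the paragraph preceding the theorem) together with the standard differentiability of the $p(x)$-energy part. For the geometry near zero, combining $(F)$, Corollary \ref{corollary}, and \eqref{NOVAEQ} bounds $|\Psi(u)|$ by a sum of powers of $\|u\|_X$ with exponents at least $2\min(r^-,s^-)>p^+$ (using \eqref{growth3.1}); Proposition \ref{norm-prop} together with $(V_0)$ yields a lower bound of type $\|u\|_X^{p^+}$ for the kinetic part near $0$, so there exist $\rho,\alpha>0$ with $J\ge \alpha$ on $\|u\|_X=\rho$. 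Integrating $(f_2)$ on $[l,t]$ gives $F(x,t)\ge c_l(t/l)^{\theta/2}$ for $t\ge l$; for any nonnegative $\phi\in C_0^\infty(\mathbb{R}^N)$ with $\phi\ge l$ on a set of positive measure one then has $\Psi(T\phi)\gtrsim T^\theta$, while the kinetic part of $J(T\phi)$ grows at most like $T^{p^+}$, so $J(T\phi)\to -\infty$ as $T\to\infty$.

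The Mountain Pass Theorem supplies a Palais--Smale sequence $(u_n)\subset X$ at level $c>0$. Using $(f_2)$, the symmetry $\lambda(x,y)=\lambda(y,x)$ of Remark \ref{RMK1}(i) and Fubini give
$$\Psi'(u)u=\int_{\mathbb{R}^N}\int_{\mathbb{R}^N}\frac{F(x,u(x))\,f(y,u(y))\,u(y)}{|x-y|^{\lambda(x,y)}}\,dx\,dy \;\ge\; \theta\,\Psi(u),$$
whence
$$J(u_n)-\frac{1}{\theta}J'(u_n)u_n \;\ge\; \Big(\frac{1}{p^+}-\frac{1}{\theta}\Big)\int_{\mathbb{R}^N}\bigl(|\nabla u_n|^{p(x)}+V(x)|u_n|^{p(x)}\bigr)\,dx,$$
and Proposition \ref{norm-prop} converts this into a uniform bound on $\|u_n\|_X$. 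Extracting a subsequence $u_n\rightharpoonup u^\star$ in $X$, under hypothesis (ii) the compact embedding $E\hookrightarrow L^{q(x)}(\mathbb{R}^N)$ for all $p\ll q\ll p^\star$ yields strong convergence in $L^{q^\pm r(x)}(\mathbb{R}^N)$ and $L^{q^\pm s(x)}(\mathbb{R}^N)$; passing to the limit in $J'(u_n)\varphi$ for $\varphi\in C_0^\infty(\mathbb{R}^N)$---via the same dominated-convergence arguments used in Step 3 of Lemma \ref{C1}, powered by Proposition \ref{HSL-variable}---shows $J'(u^\star)=0$, and lower semicontinuity forces $u^\star\ne 0$ since $J(u^\star)\ge \alpha>0$.

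The main obstacle is case (i), where the weak limit may vanish. Invoke Lemma \ref{lions-variable}: if $\lim_n\sup_{y\in\mathbb{R}^N}\int_{B_r(y)}|u_n|^{p(x)}dx=0$, then $u_n\to 0$ in $L^{q^\pm r(x)}(\mathbb{R}^N)$ and $L^{q^\pm s(x)}(\mathbb{R}^N)$, so by Proposition \ref{HSL-variable} both $\Psi(u_n)\to 0$ and $\Psi'(u_n)u_n\to 0$; combined with $J(u_n)\to c$ and $J'(u_n)u_n\to 0$ this forces $\int(|\nabla u_n|^{p(x)}+V|u_n|^{p(x)})dx\to 0$ and hence $c=0$, a contradiction. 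Therefore there exist $\delta>0$ and $y_n\in\mathbb{R}^N$ with $\int_{B_r(y_n)}|u_n|^{p(x)}dx\ge \delta$. Replacing each $y_n$ by a nearest point of $\mathbb{Z}^N$ (enlarging $r$) and using that $V,p,q,f$ are $\mathbb{Z}^N$-periodic---whence $\lambda$ is $\mathbb{Z}^N\times\mathbb{Z}^N$-periodic by Remark \ref{RMK1}(ii)---the shifted sequence $v_n(x)=u_n(x+y_n)$ remains a bounded Palais--Smale sequence at level $c$ with the same mass lower bound on $B_r(0)$. By Proposition \ref{cpt-emb}(ii), along a subsequence $v_n\to v^\star$ strongly in $L^{p(x)}_{\rm loc}(\mathbb{R}^N)$, so $v^\star\ne 0$; arguing locally as in case (ii) one verifies $J'(v^\star)=0$, completing the proof. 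The non-vanishing argument, relying critically on the variable-exponent Lions lemma and on the translation-invariance of $J$ inherited from the periodicity of $\lambda$, is the key technical point.
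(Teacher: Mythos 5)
Your overall strategy coincides with the paper's: mountain-pass geometry from $(F)$, (\ref{NOVAEQ})--(\ref{growth3.1}) and Proposition \ref{HSL-variable}; boundedness of the Palais--Smale sequence from $(f_2)$; the variable-exponent Lions lemma (Lemma \ref{lions-variable}) plus $\mathbb{Z}^N$-translations and Remark \ref{RMK1} in case $(V_1)$; and the compact embedding in case $(V_2)$. However, two steps are genuinely incomplete. The main one is the limit passage in the principal part: you never explain why $\int_{\mathbb{R}^N}|\nabla u_n|^{p(x)-2}\nabla u_n\cdot\nabla\varphi\,dx\to\int_{\mathbb{R}^N}|\nabla u^\star|^{p(x)-2}\nabla u^\star\cdot\nabla\varphi\,dx$. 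Weak convergence $u_n\rightharpoonup u^\star$ only gives $\nabla u_n\rightharpoonup\nabla u^\star$ in $(L^{p(x)}(\mathbb{R}^N))^N$, which does not identify the weak limit of the nonlinear vector field $|\nabla u_n|^{p(x)-2}\nabla u_n$; the ``dominated-convergence arguments of Step 3 of Lemma \ref{C1}'' concern only the nonlocal term, and in that lemma they are run under \emph{strong} convergence of $u_n$. The paper needs a separate argument here (Lemma \ref{lemma-3}): testing $J'(u_n)$ against $u_n\varphi$ and $u\varphi$, exploiting the monotonicity of the $p(x)$-Laplacian to obtain $\nabla u_n\to\nabla u$ a.e., and then a weak-convergence lemma to conclude $|\nabla u_n|^{p(x)-2}\nabla u_n\rightharpoonup|\nabla u|^{p(x)-2}\nabla u$ in $(L^{p(x)/(p(x)-1)}(\mathbb{R}^N))^N$. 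Without some version of this, $J'(u^\star)=0$ in case (ii), and $J'(v^\star)=0$ in the periodic case, are not established.

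The second gap is your nontriviality argument in case $(V_2)$: ``lower semicontinuity forces $u^\star\neq0$ since $J(u^\star)\ge\alpha>0$'' is unjustified. Weak lower semicontinuity of the modular part combined with $\Psi(u_n)\to\Psi(u^\star)$ yields only $J(u^\star)\le\liminf_n J(u_n)=d$, an upper bound; to claim $J(u^\star)=d\ge\alpha$ you would need strong convergence $u_n\to u^\star$ in $E$, which you have not proved. The correct route --- the one the paper follows, and the very mechanism you use to exclude vanishing in case $(V_1)$ --- is by contradiction: if $u^\star=0$, the compact embedding gives $u_n\to0$ in $L^{t(x)}(\mathbb{R}^N)$ for $p\ll t\ll p^{\star}$, hence $F(\cdot,u_n)\to0$ and $f(\cdot,u_n)u_n\to0$ in $L^{q^{+}}(\mathbb{R}^N)$ and $L^{q^{-}}(\mathbb{R}^N)$, so by Proposition \ref{HSL-variable} the nonlocal term in $J'(u_n)u_n$ vanishes; together with $J'(u_n)u_n\to0$ this forces $\int_{\mathbb{R}^N}(|\nabla u_n|^{p(x)}+V(x)|u_n|^{p(x)})\,dx\to0$ and thus $J(u_n)\to0$, contradicting $d>0$. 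Both defects are repairable, but as written the proof is incomplete precisely at the quasilinear limit identification, the step to which the paper devotes Lemmas \ref{CONVEGENCIA}--\ref{critical-point}.
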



In the proof of Theorem \ref{existence} we will use variational methods. From now on $(A, \| \,\,\, \|)$ denotes $(W^{1,p(x)}(\mathbb{R}^N),\|\,\,\,\|_{W^{1,p(x)}(\mathbb{R}^N)})$ or $(E,\| \,\,\, \|_{\star})$. The energy functional $J:A \to \mathbb{R}$ associated with  (\ref{Choquard-eq}) is given by,
$$
J(u)=\int_{\mathbb{R}^N}\frac{1}{p(x)}(|\nabla u(x)|^{p(x)}+V(x)|u(x)|^{p(x)})\,dx-\frac{1}{2}\int_{\mathbb{R}^N} \int_{\mathbb{R}^N} \frac{F(x,u(x))F(y,u(y))}{|x-y|^{\lambda(x,y)}}\,dx dy,
$$ 
that is,
$$
J(u)=\int_{\mathbb{R}^N}\frac{1}{p(x)}(|\nabla u(x)|^{p(x)}+V(x)|u(x)|^{p(x)})\,dx-\Psi(u). 
$$
By the study made in the previous section, $J \in C^{1}(A,\mathbb{R})$ with
\begin{equation*}
\begin{aligned}
J'(u)v=\int_{\mathbb{R}^N}|\nabla u(x)|^{p(x)-2}\nabla u(x) \nabla v(x) \, dx &+ \int_{\mathbb{R}^N} V(x)|u(x)|^{p(x)-2}u(x)v(x) dx \\
&- \int_{\mathbb{R}^N} \int_{\mathbb{R}^N} \frac{F(x,u(x)f(y,u(y)) v(y))}{|x-y|^{\lambda(x,y)}} dxdy, \quad \forall u,v \in A.
\end{aligned}
\end{equation*}

Our first lemma establishes the mountain pass geometry. 

\begin{lemma}\label{first-geo} The functional $J$ verifies the following properties:
\begin{enumerate}[(i)]
	
\item 	There exists $\rho>0$ small enough such that $J(u) \geq \eta$ for $u \in A$ with $\| u\|=\rho$ for some $\eta>0.$
\item There exists $e \in A$ such that $\| e\| > \rho$ and $J(e) <0.$

\end{enumerate}
\end{lemma}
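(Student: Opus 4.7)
The strategy is the classical mountain pass geometry adapted to the variable exponent framework, using Corollary \ref{corollary} to handle the nonlocal term and the Ambrosetti--Rabinowitz condition $(f_2)$ to force $J$ negative along a ray.

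For item $(i)$, I would start by controlling the local part from below. Since $V_0>0$ by $(V_0)$, for $\|u\|<1$ the hypothesis $p\in C^{+}(\mathbb R^N)$ together with Proposition \ref{norm-prop} yields
\[
\int_{\mathbb R^N}\frac{1}{p(x)}\bigl(|\nabla u|^{p(x)}+V(x)|u|^{p(x)}\bigr)\,dx \geq C_1\|u\|^{p^{+}},
\]
for a constant $C_1>0$ depending only on $p^{\pm}$ and $V_0$ (with $\|\cdot\|$ being either of the two norms on $A$). For the nonlocal part, I would use $(F)$ together with Corollary \ref{corollary} (recall $r,s\in\mathcal{M}$ by \eqref{NOVAEQ}) to obtain
\[
|\Psi(u)|\leq C\sum_{\alpha\in\{r,s\}}\Bigl(\bigl\||u|^{\alpha(x)}\bigr\|_{L^{q^{+}}(\mathbb R^N)}^{2}+\bigl\||u|^{\alpha(x)}\bigr\|_{L^{q^{-}}(\mathbb R^N)}^{2}\Bigr).
\]
Invoking the continuous embeddings $W^{1,p(x)}(\mathbb R^N)\hookrightarrow L^{\alpha(x)q^{\pm}}(\mathbb R^N)$ from Proposition \ref{cpt-emb} (valid by \eqref{NOVAEQ}), and applying Proposition \ref{norm-prop} to each of the four moduli for small $\|u\|$, each term is bounded by $C\|u\|^{2\alpha^{-}}$ or higher powers. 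Using \eqref{growth3.1}, which says $2r^{-},2s^{-}>p^{+}$, I can absorb all four contributions into
\[
|\Psi(u)|\leq C_2\bigl(\|u\|^{2r^{-}}+\|u\|^{2s^{-}}\bigr),\qquad \|u\|\leq 1.
\]
Combining, $J(u)\geq C_1\|u\|^{p^{+}}-C_2(\|u\|^{2r^{-}}+\|u\|^{2s^{-}})$, so choosing $\rho>0$ sufficiently small gives $J(u)\geq\eta>0$ on $\|u\|=\rho$.

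For item $(ii)$, I would first extract a power-law lower bound on $F$. Integrating $(f_2)$, $\partial_t\bigl(F(x,t)\,t^{-\theta/2}\bigr)\geq 0$ for $t\geq l$, together with the hypothesis $F(x,l)\geq c_l$ yields
\[
F(x,t)\geq c_l\,l^{-\theta/2}\,t^{\theta/2}\qquad\text{for all }x\in\mathbb R^N,\ t\geq l.
\]
Next, pick a nonnegative $\phi\in C_c^{\infty}(\mathbb R^N)\subset A$ with $\phi\geq 1$ on a set $\Omega$ of positive measure. For $t\geq l$ large, on $\Omega\times\Omega$ the double integrand satisfies $F(x,t\phi(x))F(y,t\phi(y))\geq C\,t^{\theta}$, so
\[
\Psi(t\phi)\geq C_3\,t^{\theta}\int_{\Omega}\int_{\Omega}\frac{dx\,dy}{|x-y|^{\lambda(x,y)}}=:C_4\,t^{\theta},
\]
the last integral being a finite positive constant because $\Omega$ is bounded and $\lambda^{+}<N$. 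On the other hand, for $t\geq 1$ the local part is bounded above by $C_5\,t^{p^{+}}$. Therefore
\[
J(t\phi)\leq C_5\,t^{p^{+}}-C_4\,t^{\theta},
\]
and since $\theta>p^{+}$, we have $J(t\phi)\to -\infty$ as $t\to+\infty$. Taking $e=t_0\phi$ for $t_0$ large enough produces $\|e\|>\rho$ and $J(e)<0$.

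The main obstacle is item $(i)$: one has to be careful that \emph{all} four powers coming from the two exponents $r,s$ and the two integrability indices $q^{\pm}$ exceed $p^{+}$, which is exactly guaranteed by \eqref{growth3.1}; the key technical issue is tracking whether each relevant $L^{\alpha(x)q^{\pm}}$ norm is above or below $1$ in order to apply the correct inequality from Proposition \ref{norm-prop}(ii). By restricting to $\|u\|$ small, all such norms are eventually $\leq 1$ via the continuous embedding, which forces the \emph{larger} exponent $\alpha^{+}q^{\pm}$ in the modular-to-norm conversion and hence yields the favorable power $2\alpha^{-}>p^{+}$ on $\|u\|$ after taking square roots and squaring back.
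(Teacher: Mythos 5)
Your proposal is correct and follows essentially the same route as the paper: for (i) the HLS-type inequality (via Corollary \ref{corollary}/Proposition \ref{HSL-variable}) plus $(F)$, the continuous embeddings, the modular–norm relations of Proposition \ref{norm-prop}, and the condition $2r^-,2s^->p^+$; for (ii) integrating $(f_2)$ to get $F(x,t)\geq C t^{\theta/2}$ for $t\geq l$ and evaluating $J$ along $t\varphi$ with $\theta>p^+$ (you in fact give more detail here than the paper does). The only cosmetic points are that the sum over $\alpha\in\{r,s\}$ needs a trivial Young-type step to absorb the cross products $|u|^{r(x)}|u|^{s(y)}$, and your closing remark about the ``larger exponent'' is a slip of wording — the upper modular-to-norm bound for norms below $1$ uses the smaller exponent, which is exactly what produces the power $2\alpha^-$ you use.
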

\begin{proof}
$i)$	
By Proposition \ref{HSL-variable} and  $\eqref{symetry}$,  
$$
\int_{\mathbb{R}^N} \int_{\mathbb{R}^N} \frac{F(x,u(x))F(y,u(y))}{|x-y|^{\lambda(x,y)}} dx dy\leq C (\| F(.,u)\|^{2}_{L^{q^{+}}(\mathbb{R}^N)} + \| F(.,u)\|^{2}_{L^{q^{-}}(\mathbb{R}^N)})
$$
for all $u \in A$. Note that 
\begin{align*}
\| F(.,u)\|_{L^{q^{+}}(\mathbb{R}^N)} & \leq C \left( \int_{\mathbb{R}^N} |u(x)|^{q^{+}r(x)} + |u(x)|^{q^{+}s(x)} dx \right)^{\frac{1}{q^{+}}} \\
&\leq C\left(\int_{\mathbb{R}^N} |u(x)|^{q^{+}r(x)} dx \right)^{\frac{1}{q^{+}}} + C\left(\int_{\mathbb{R}^N} |u(x)|^{q^{+}s(x)} dx \right)^{\frac{1}{q^{+}}}
\end{align*}
$$
\leq C\left(\max \left( \| u\|^{r^{+}}_{L^{q^{+}r(x)}(\mathbb{R}^N)} , \| u\|^{r^{-}}_{L^{q^{+}r(x)}(\mathbb{R}^N)}\right) + \max \left( \| u\|^{s^{+}}_{L^{q^{+}s(x)}(\mathbb{R}^N)} , \| u\|^{s^{-}}_{L^{q^{+}s(x)}(\mathbb{R}^N)}\right)\right)
$$
and
\begin{align*}
\| F(.,u)\|_{L^{q^{-}}(\mathbb{R}^N)} &\leq C\max \left( \| u\|^{r^{+}}_{L^{q^{-}r(x)}(\mathbb{R}^N)} , \| u\|^{r^{-}}_{L^{q^{-}r(x)}(\mathbb{R}^N)}\right)\\
&+ C\max \left( \| u\|^{s^{+}}_{L^{q^{+}s(x)}(\mathbb{R}^N)} , \| u\|^{s^{-}}_{L^{s^{-}r(x)}(\mathbb{R}^N)}\right),  
\end{align*}
according to $(F)$. The continuous embeddings $A \hookrightarrow W^{1,p(x)}(\mathbb{R}^N)$ and $W^{1,p(x)}(\mathbb{R}^N) \hookrightarrow L^{p^{\star}(x)}(\mathbb{R}^N)$ implies that 
$$
\| u\|_{L^{q^{+}r(x)}(\mathbb{R}^N)}, \| u\|_{L^{q^{-}r(x)}(\mathbb{R}^N)}, \| u\|_{L^{q^{+}s(x)}(\mathbb{R}^N)}, \| u\|_{L^{q^{-}s(x)}(\mathbb{R}^N)} \leq L \| u\|_{W^{1,p(x)}(\mathbb{R}^N)}, u \in A
$$
for a positive constant $L>0$ that does not depend on $ u \in A.$

By using the classical inequality 
$$
(a+b)^{\alpha} \leq 2^{\alpha-1}(a^{\alpha} + b^{\alpha}), \,\, a,b>0 \quad \mbox{with} \quad \alpha >1,
$$ 
we get
\begin{align*}
J(u) & \geq \int_{\mathbb{R}^N} \frac{1}{p^{+}} \left( |\nabla u(x)|^{p(x)} + V_0 |u(x)|^{p(x)}\right) dx - C\max(\| u\|_{W^{1,p(x)}(\mathbb{R}^N)}^{2r^{-}}, \| u\|_{W^{1,p(x)}(\mathbb{R}^N)}^{2r^{+}} )\\
& - C\max(\| u\|_{W^{1,p(x)}(\mathbb{R}^N)}^{2s^{-}}, \| u\|_{W^{1,p(x)}(\mathbb{R}^N)}^{2s^{+}} )\\
& \geq \overline{C}(\| \nabla u\|^{p^{+}}_{L^{p(x)}(\mathbb{R}^N)} + \|  u\|^{p^{+}}_{L^{p(x)}(\mathbb{R}^N)} ) - C(\| u\|_{W^{1,p(x)}(\mathbb{R}^N)}^{2r^{+}} + \| u\|_{W^{1,p(x)}(\mathbb{R}^N)}^{2r^{-}}) \\
&- C(\| u\|_{W^{1,p(x)}(\mathbb{R}^N)}^{2s^{+}} + \| u\|_{W^{1,p(x)}(\mathbb{R}^N)}^{2s^{-}}) \\
& \geq \overline{C}(\| \nabla u\|^{p^{+}}_{L^{p(x)}(\mathbb{R}^N)} + \|  u\|^{p^{+}}_{L^{p(x)}(\mathbb{R}^N)} ) - K(\| \nabla u\|_{L^{p(x)}(\mathbb{R}^N)}^{2r^{+}} + \| u\|_{L^{p(x)}(\mathbb{R}^N)}^{2r^{+}}) \\
& -K(\| \nabla u\|_{L^{p(x)}(\mathbb{R}^N)}^{2r^{-}} + \| u\|_{L^{p(x)}(\mathbb{R}^N)}^{2r^{-}})  -K(\| \nabla u\|_{L^{p(x)}(\mathbb{R}^N)}^{2s^{+}} + \| u\|_{L^{p(x)}(\mathbb{R}^N)}^{2s^{+}}) \\
& -K(\| \nabla u\|_{L^{p(x)}(\mathbb{R}^N)}^{2s^{-}} + \| u\|_{L^{p(x)}(\mathbb{R}^N)}^{2s^{-}})
\end{align*}
where $C, \overline{C}$  and $K$ are constants that does not depend on $u.$
Since $2r^{-},2s^{-} > p^{+}$ and $\| u\|_{W^{1,p(x)}(\mathbb{R}^N)}   = \| \nabla u\|_{L^{p(x)}(\mathbb{R}^N)} + \| u\|_{L^{p(x)}(\mathbb{R}^N)} \leq \overline{K}\| u\|$ where $\overline{K} $ is a constant that does not depend on $u.$ The result follows by fixing $\| u\|=\rho$ with $\rho$ small enough.
\\

ii)
The condition $(f_2)$ implies that 
$$
F(x,t) \geq C t^{\frac{\theta}{2}} \quad \forall (x,t) \in \mathbb{R}^N \times \mathbb{R} \quad \mbox{and} \quad t \geq l,
$$ 
where $C$ depends only on $l$ and $\theta.$ Now, considering a nonnegative function $\varphi \in C^{\infty}_{0}(\mathbb{R}^N) \setminus \{0\}$ the last inequality permits to conclude that $J(t \varphi)<0$ for $t$ large enough. This finishes the proof.

\end{proof}
Using the Mountain Pass Theorem without the Palais-Smale condition (see \cite[Theorem 5.4.1]{Chab}), there is a sequence $(u_n)\subset A$ such that 
$$
J(u_n ) \rightarrow d \quad \mbox{and} \quad J^{'}(u_n) \rightarrow 0,
$$ 
where $d>0$ is the mountain pass level defined by
\begin{equation}\label{level}
 d:= \inf_{\gamma \in \Gamma} \sup_{t \in [0,1]} J(\gamma (t))
\end{equation}
with $\Gamma:=\{ \gamma \in C([0,1], A); \gamma(0)=0, \gamma (1)=e\}$.

\vspace{0.5 cm}

Regarding such sequence we have the next result.

\begin{lemma}
The sequence $(u_n)$ is bounded.
\end{lemma}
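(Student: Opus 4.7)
The strategy is the standard Ambrosetti--Rabinowitz trick adapted to the nonlocal setting: combine $J(u_n)$ with a suitable multiple of $J'(u_n)u_n$ so that the nonlocal terms cancel/become nonnegative, and so that the local part yields a coercive quantity in $\|u_n\|$. Concretely, since $J(u_n)\to d$ and $J'(u_n)\to 0$ in $A'$, we have
\begin{equation*}
J(u_n)-\tfrac{1}{\theta}J'(u_n)u_n \;=\; d+o_n(1)+o_n(1)\|u_n\|.
\end{equation*}
I would expand the left-hand side as
\begin{equation*}
\int_{\mathbb{R}^N}\Bigl(\tfrac{1}{p(x)}-\tfrac{1}{\theta}\Bigr)\bigl(|\nabla u_n|^{p(x)}+V(x)|u_n|^{p(x)}\bigr)dx \;-\;\Psi(u_n)\;+\;\tfrac{1}{\theta}\int_{\mathbb{R}^N}\!\!\int_{\mathbb{R}^N}\frac{F(x,u_n(x))f(y,u_n(y))u_n(y)}{|x-y|^{\lambda(x,y)}}\,dx\,dy.
\end{equation*}
Because $\theta>p^+\geq p(x)$, the local piece is bounded below by $\bigl(\tfrac{1}{p^+}-\tfrac{1}{\theta}\bigr)\int(|\nabla u_n|^{p(x)}+V_0|u_n|^{p(x)})\,dx$, which is what we will eventually use to force boundedness.

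\noindent For the nonlocal contribution, the key is to show the sum of the last two terms is nonnegative. By the symmetry $\lambda(x,y)=\lambda(y,x)$ (Remark \ref{RMK1}(i)) and Fubini, the double integral in the $J'(u_n)u_n$ expression can equivalently be written with $x$ and $y$ swapped. Then $(f_2)$ gives $F(y,u_n(y))\leq \tfrac{2}{\theta}f(y,u_n(y))u_n(y)$ pointwise (on the set where $u_n>0$; in the periodic case one works with positive solutions, and in case $(V_2)$ the same argument with the extension $f(x,t)=0$ for $t\leq 0$ if needed applies), hence
\begin{equation*}
\tfrac12\,F(x,u_n(x))F(y,u_n(y)) \;\leq\; \tfrac{1}{\theta}F(x,u_n(x))f(y,u_n(y))u_n(y),
\end{equation*}
which after integration against the positive kernel $|x-y|^{-\lambda(x,y)}$ shows $-\Psi(u_n)+\tfrac{1}{\theta}\int\!\!\int\tfrac{F(x,u_n)f(y,u_n)u_n(y)}{|x-y|^{\lambda(x,y)}}\geq 0$.

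\noindent Combining the two estimates yields
\begin{equation*}
\Bigl(\tfrac{1}{p^+}-\tfrac{1}{\theta}\Bigr)\int_{\mathbb{R}^N}\bigl(|\nabla u_n|^{p(x)}+V_0|u_n|^{p(x)}\bigr)\,dx \;\leq\; d+o_n(1)+o_n(1)\|u_n\|.
\end{equation*}
Assume, to reach a contradiction, that $\|u_n\|\to\infty$. For $n$ large, either $\|\nabla u_n\|_{L^{p(x)}}>1$ or $\|u_n\|_{L^{p(x)}}>1$ (in case $(A,\|\cdot\|)=(E,\|\cdot\|_\star)$, the analogous statement holds for the weighted norm, using $V_0>0$). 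Proposition \ref{norm-prop}(ii) then gives $\int|\nabla u_n|^{p(x)}\geq\|\nabla u_n\|_{L^{p(x)}}^{p^-}$ and $V_0\int|u_n|^{p(x)}\geq V_0\|u_n\|_{L^{p(x)}}^{p^-}$, so the left-hand side is $\gtrsim \|u_n\|^{p^-}$. Since $p^->1$, dividing by $\|u_n\|$ and letting $n\to\infty$ contradicts the right-hand side being $O(1)+o_n(1)\|u_n\|$, and the boundedness follows.

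\noindent\textbf{Main obstacle.} The only nontrivial point is the nonlocal-term bookkeeping: one must symmetrize $J'(u_n)u_n$ via Remark \ref{RMK1}(i) before applying $(f_2)$ pointwise, since $(f_2)$ as stated is a pointwise inequality in $(y,u_n(y))$ but the variable $u_n(x)$ appearing in $F(x,u_n(x))$ is ``inert'' in the kernel. Once this symmetry step is performed the argument reduces cleanly to the scalar AR calculation.
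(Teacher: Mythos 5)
Your proof is correct and follows essentially the same route as the paper: the Ambrosetti--Rabinowitz combination $J(u_n)-\tfrac{1}{\theta}J'(u_n)u_n$, nonnegativity of the nonlocal remainder via $(f_2)$, and coercivity of the local modular compared with the linear growth $d+o_n(1)+o_n(1)\|u_n\|$ using Proposition \ref{norm-prop}. The only difference is your ``symmetrization'' step, which is actually unnecessary (the pointwise inequality $\tfrac12 F(x,u_n(x))F(y,u_n(y))\leq \tfrac1\theta F(x,u_n(x))f(y,u_n(y))u_n(y)$ applies directly, exactly as the paper groups the terms), but it is harmless.
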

\begin{proof}
First of all, note that 
$$
J(u_n)  - \frac{J^{'}(u_n)u_n}{\theta} \leq d +1+  \| u_n\|.
$$
for $n$	large enough and $d$ given in \eqref{level}. On the other hand,  
\begin{align*}
J(u_n) - \frac{J^{'}(u_n)u_n}{\theta} & = \int_{\mathbb{R}^N} \left( \frac{1}{p(x)} - \frac{1}{\theta}\right)(| \nabla u_n (x)|^{p(x)} + V(x) |u_n (x)|^{p(x)}) dx \\
&+ \int_{\mathbb{R}^N} \int_{\mathbb{R}^N} \frac{F(x,u_n(x))}{|x-y|^{\lambda(x,y)}} \left( \frac{f(y,u_n(y))u_n(y)}{\theta}  - \frac{F(y,u_n(y))}{2}\right) dx dy \\
& \geq C \int_{\mathbb{R}^N} |\nabla u_n (x)|^{p(x)} + V(x)|u_n (x)|^{p(x)} dx. 
\end{align*}
The last two inequalities give the boundedness of $(u_n)$ in $A.$
\end{proof}

Since $(u_n)$ is bounded in $A$,  $(u_n)$ is also bounded in $W^{1,p(x)}(\mathbb{R}^N)$. The compact embeddings contained in Proposition \ref{cpt-emb} imply that exists $u \in A$ and a subsequence, still denoted by $(u_n)$, such that $u_n(x) \rightarrow u(x) $ a.e in $\mathbb{R}^N$ and $\nabla u_n \rightharpoonup \nabla u$ in $(L^{p(x)}(\mathbb{R}^N))^N ,$ where the symbol $\rightharpoonup $ denotes the weak convergence. 

\vspace{0.5 cm}

The next two lemmas will be needed to prove that $u$ is a critical point  of $J$. 
\begin{lemma} \label{CONVEGENCIA}
The following limits hold for some subsequence:	
\begin{enumerate}[(i)]
\item	
$$
\int_{\mathbb{R}^N} \int_{\mathbb{R}^N} \frac{F(x,u_n(x)) f(y,u(y))v(y)}{|x-y|^{\lambda(x,y)}}dx dy \to \int_{\mathbb{R}^N} \int_{\mathbb{R}^N} \frac{F(x,u(x)) f(y,u(y))v(y)}{|x-y|^{\lambda(x,y)}}dx dy 
$$
for all $v \in C^{\infty}_{0}(\mathbb{R}^N)$
\item
$$
 \int_{\mathbb{R}^N} \int_{\mathbb{R}^N} \frac{F(x,u_n(x))(f(y,u_n(y)) v(y) - f(y,u(y))v(y))}{|x-y|^{\lambda(x,y)}} dx dy \to 0
$$
for all $v \in C^{\infty}_{0}(\mathbb{R}^N).$
\item $$ \int_{\mathbb{R}^N} \int_{\mathbb{R}^N} \frac{F(x,u_n(x)) f(y,u_n(y))v(y)}{|x-y|^{\lambda(x,y)}} dx dy \rightarrow \int_{\mathbb{R}^N} \int_{\mathbb{R}^N} \frac{F(x,u(x)) f(y,u(y))v(y)}{|x-y|^{\lambda(x,y)}} dx dy$$
for all $v \in C^{\infty}_{0}(\mathbb{R}^N).$
\end{enumerate}
\end{lemma}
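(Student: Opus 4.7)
The plan is to handle (i) and (ii) separately; (iii) then follows by writing $f(y,u_n)v = f(y,u)v + (f(y,u_n)-f(y,u))v$ inside the integral and adding the two limits obtained from (i) and (ii).

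For part (ii) the key is the compact support of $v$: the difference $(f(\cdot,u_n)-f(\cdot,u))v$ lives on the fixed compact set $K:=\mathrm{supp}(v)$. Proposition \ref{HSL-variable} bounds the integral in (ii) by $C\sum_{\sigma\in\{+,-\}}\|F(\cdot,u_n)\|_{L^{q^\sigma}(\mathbb{R}^N)}\|(f(\cdot,u_n)-f(\cdot,u))v\|_{L^{q^\sigma}(\mathbb{R}^N)}$. The $F$-factors are uniformly bounded in $n$ by $(F)$ together with $(\ref{NOVAEQ})$ and Sobolev embedding. For the remaining factors, Proposition \ref{cpt-emb} combined with $(\ref{NOVAEQ})$ yields $u_n\to u$ strongly in $L^{r(x)q^\pm}(K)$ and $L^{s(x)q^\pm}(K)$, and continuity of the Nemytskii operator induced by $(f_1)$ on the bounded set $K$ then gives $\|(f(\cdot,u_n)-f(\cdot,u))v\|_{L^{q^\sigma}(\mathbb{R}^N)}\to 0$.

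For part (i) I introduce $\Phi(x):=\int_{\mathbb{R}^N}\frac{f(y,u(y))v(y)}{|x-y|^{\lambda(x,y)}}\,dy$; by Fubini the task reduces to $\int_{\mathbb{R}^N}F(\cdot,u_n)\Phi\,dx\to\int_{\mathbb{R}^N}F(\cdot,u)\Phi\,dx$. The first ingredient I need is the weak convergence $F(\cdot,u_n)\rightharpoonup F(\cdot,u)$ in both $L^{q^+}(\mathbb{R}^N)$ and $L^{q^-}(\mathbb{R}^N)$: the sequence is uniformly bounded in these spaces (by $(F)$, $(\ref{NOVAEQ})$ and Sobolev embedding), converges a.e.\ to $F(\cdot,u)$ by continuity of $F$, and the standard ``a.e.\ plus bounded implies weak convergence'' lemma in reflexive Lebesgue spaces does the rest. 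The second ingredient is a near/far decomposition $\Phi=\Phi_a+\Phi_b$, where $\Phi_a$ is the $y$-integral over $\{|x-y|\leq 1\}$ and $\Phi_b$ is the piece over $\{|x-y|>1\}$. Setting $h:=f(\cdot,u)v\in L^{q^+}(\mathbb{R}^N)\cap L^{q^-}(\mathbb{R}^N)$, the pointwise estimates
$$
|\Phi_a(x)|\leq\int_{\mathbb{R}^N}\frac{|h(y)|}{|x-y|^{\lambda^+}}\,dy,\qquad |\Phi_b(x)|\leq\int_{\mathbb{R}^N}\frac{|h(y)|}{|x-y|^{\lambda^-}}\,dy
$$
combined with $(\ref{HSL2'})$, $(\ref{HSL2})$ and classical (constant-exponent) Hardy--Littlewood--Sobolev place $\Phi_a\in L^{(q^+)'}(\mathbb{R}^N)$ and $\Phi_b\in L^{(q^-)'}(\mathbb{R}^N)$. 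The two weak convergences then give $\int F(\cdot,u_n)\Phi_a\to\int F(\cdot,u)\Phi_a$ and $\int F(\cdot,u_n)\Phi_b\to\int F(\cdot,u)\Phi_b$, whose sum is exactly the claim in (i).

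The main obstacle is part (i). The naive route of applying Proposition \ref{HSL-variable} directly to $F(\cdot,u_n)-F(\cdot,u)$ fails: under our hypotheses, especially the periodic case $(V_1)$, only local strong convergence of $u_n$ is available, so the global $L^{q^\pm}(\mathbb{R}^N)$ norms of $F(\cdot,u_n)-F(\cdot,u)$ need not tend to zero. The near/far split is the device that circumvents this: it trades the variable-exponent kernel for two classical Riesz potentials with constant exponents $\lambda^+$ and $\lambda^-$, thereby putting $\Phi$ into the sum $L^{(q^+)'}(\mathbb{R}^N)+L^{(q^-)'}(\mathbb{R}^N)$, which couples exactly with the two weak convergences of $F(\cdot,u_n)$.
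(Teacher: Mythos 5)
Your proof is correct and follows essentially the same route as the paper: for (i) it establishes $F(\cdot,u_n)\rightharpoonup F(\cdot,u)$ in $L^{q^+}(\mathbb{R}^N)$ and $L^{q^-}(\mathbb{R}^N)$ (boundedness plus a.e.\ convergence) and pairs this weak convergence against the fixed continuous linear functional generated by the kernel and $f(\cdot,u)v$, while (ii) uses the compact support of $v$, the local compact embeddings and dominated convergence, and (iii) is the sum of (i) and (ii), exactly as in the paper. The only cosmetic difference is that the paper invokes Proposition \ref{HSL-variable} directly to see $w\mapsto\int_{\mathbb{R}^N}\int_{\mathbb{R}^N}\frac{w(x)f(y,u(y))v(y)}{|x-y|^{\lambda(x,y)}}\,dx\,dy$ as a continuous functional on $L^{q^+}(\mathbb{R}^N)\cap L^{q^-}(\mathbb{R}^N)$, whereas you make the same duality explicit through the near/far kernel comparison and the two constant-exponent Riesz potentials, which is precisely the device already used in the paper's proof of that proposition.
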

\begin{proof}
	
\textit{i)} As $L^{q^{+}}(\mathbb{R}^N)$ and  $L^{q^{-}}(\mathbb{R}^N)$ are uniformly convex, the Banach space $(L^{q^{+}}(\mathbb{R}^N)\cap L^{q^{+}}(\mathbb{R}^N), \max (\| . \|_{L^{q^{+}}(\mathbb{R}^N)} , \| . \|_{L^{q^{-}}(\mathbb{R}^N)} )   )$ is uniformly convex (therefore reflexive). The  growth of $F$  and the fact that $(u_n)$ is bounded in  $A$  ensures that the sequence $(F(.,u_n(.)))$ is bounded in $L^{q^{+}}(\mathbb{R}^N) \cap L^{q^{-}}(\mathbb{R}^N)$.

We claim that $F(.,u_n) \rightharpoonup F(.,u)$ in $L^{q^{+}}(\mathbb{R}^N) \cap L^{q^{-}}(\mathbb{R}^N).$ Since $(F(.,u_n))$ is bounded in $L^{q^{+}}(\mathbb{R}^N) \cap L^{q^{-}}(\mathbb{R}^N)$ there exists $L \in L^{q^{+}}(\mathbb{R}^N) \cap L^{q^{-}}(\mathbb{R}^N)$ such that $F(.,u_n) \rightharpoonup L$ in $L^{q^{+}}(\mathbb{R}^N) \cap L^{q^{-}}(\mathbb{R}^N)$. Fix $\varphi \in C^{\infty}_{0}(\mathbb{R}^N)$ and consider the continuous linear functional 
$$
I_{\varphi}(w):= \int_{\mathbb{R}^N} w\varphi \ dx,w \in L^{p^{+}}(\mathbb{R}^N) \cap L^{p^{-}}(\mathbb{R}^N). 
$$ 
Then, 
$$
I_{\varphi}(F(.,u_n)) \rightarrow \int_{\mathbb{R}^N} L(x)\varphi(x) dx.
$$ 
Using \cite[Proposition 2.6]{AlvesFerreira}, $F(.,u_n) \rightharpoonup F(.,u)$ in $L^{q^{+}}(\mathbb{R}^N)$, and so,  
$$
\int_{\mathbb{R}^N}  F(x,u_n(x)) \varphi dx \rightarrow \int_{\mathbb{R}^N}  F(x,u(x)) \varphi dx. 
$$
Thereby
$$
\int_{\mathbb{R}^N}  F(x,u(x)) \varphi (x)dx =  \int_{\mathbb{R}^N} L(x)\varphi (x) dx
\quad \forall \varphi \in C^{\infty}_{0}(\mathbb{R}^N),  
$$
showing that
$$
L(x) = F(x,u(x)) \quad \mbox{ a.e in } \quad \mathbb{R}^N.
$$  
By Proposition \ref{HSL-variable}, the application
$$H(w):=\int_{\mathbb{R}^N} \int_{\mathbb{R}^N}  \frac{w(x)f(y,u(y))v(y)}{|x-y|^{\lambda(x,y)}} , w \in L^{q^{+}}(\mathbb{R}^N) \cap L^{q^{-}}(\mathbb{R}^N) $$
is a continuous linear functional. Since $F(.,u_n) \rightharpoonup F(.,u)$ in $L^{q^{+}}(\mathbb{R}^N) \cap L^{q^{-}}(\mathbb{R}^N)$, it follows that  
$$ 
\int_{\mathbb{R}^N} \int_{\mathbb{R}^N} \frac{F(x,u_n(x)) f(y,u_n(y))v(y)}{|x-y|^{\lambda(x,y)}}dx dy \rightarrow \int_{\mathbb{R}^N} \int_{\mathbb{R}^N} \frac{F(x,u(x)) f(y,u(y))v(y)}{|x-y|^{\lambda(x,y)}}dx dy,
$$
which proves $i)$. 

\textit{ii)} Denote by $I$ the integral described in $ii).$
Then
\begin{align*}
|I| & \leq C \| F(.,u_n)\|_{L^{q^{+}}(\mathbb{R}^N)} \| f(.,u_n)v - f(.,u)v\|_{L^{q^{+}}(\mathbb{R}^N)} \\
&+ \| F(.,u_n)\|_{L^{q^{-}}(\mathbb{R}^N)} \| f(.,u_n)v - f(.,u)v\|_{L^{q^{-}}(\mathbb{R}^N)},
\end{align*}
according  to Proposition \ref{HSL-variable} with $p(\cdot) = q(\cdot)$. Since $(u_n)$ is bounded in $A$, $(F(.,u_n))$ is also bounded sequence in $L^{q^{+}}(\mathbb{R}^N) \cap  L^{q^{-}}(\mathbb{R}^N).$ Let $v \in C^{\infty}_{0}(\mathbb{R}^N)$ and consider a bounded open set $\Omega$ that contains the support of $v.$  Since $\Omega$ is bounded, the boundedness of $(u_n)$ in $A$ combined with (\ref{NOVAEQ}) and the compact embeddings given in Proposition \ref{cpt-emb} guarantee that for some a subsequence, 
\begin{itemize}
	\item $u_n \to u$ in $ L^{q^{+}r(x)}(\Omega),$
	\item $u_n \to u$ in $ L^{q^{+}s(x)}(\Omega),$
	\item $u_n(x) \rightarrow u(x) $ a.e in $\Omega,$
	\item $|u_n(x)| \leq h_1(x)$ a.e in $\Omega$ for some $h_1 \in L^{q^{+}r(x)}(\Omega),$
	\item $|u_n(x)| \leq h_2(x)$ a.e in $\Omega$ for some $h_2 \in L^{q^{+}s(x)}(\Omega).$
\end{itemize}
These informations combined with Lebesgue's Dominated Convergence Theorem give
$$
 \| f(.,u_n)v - f(.,u)v\|_{L^{q^{+}}(\mathbb{R}^N)}=\| f(.,u_n)v - f(.,u)v\|_{L^{q^{+}}(\Omega)} \rightarrow 0.
$$
A similar reasoning provides
$$
\| f(.,u_n)v - f(.,u)v\|_{L^{q^{-}}(\mathbb{R}^N)}=\| f(.,u_n)v - f(.,u)v\|_{L^{q^{-}}(\Omega)} \rightarrow 0.
$$
This finishes the proof of $ii)$. 
\textit{iii)} is a direct consequence of \textit{i)} and \textit{ii)}.
\end{proof}

\begin{lemma}\label{lemma-3}
For a subsequence the two properties below hold
\begin{enumerate}[(i)]
\item $\nabla u_n(x) \rightarrow \nabla u(x)$  a.e in $\mathbb{R}^N.$
\item $|\nabla u_n|^{p(x)-2} \nabla u_n \rightharpoonup |\nabla u_n|^{p(x)-2} \nabla u_n$ in $(L^{\frac{p(x)}{p(x)-1}}({\mathbb{R}}^n))^N.$
\end{enumerate}
\end{lemma}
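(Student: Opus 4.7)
The plan is a standard localization-plus-monotonicity argument tailored to the $p(x)$-Laplacian. Fix $R>0$ and a cut-off $\varphi \in C^{\infty}_{0}(\mathbb{R}^N)$ with $0 \leq \varphi \leq 1$, $\varphi \equiv 1$ on $B_R(0)$ and $\mathrm{supp}(\varphi) \subset B_{R+1}(0)$, and test $J'(u_n)$ against $\psi_n := \varphi(u_n-u) \in A$. Because $(\psi_n)$ is bounded in $A$ and $J'(u_n)\to 0$ in $A'$, the pairing $J'(u_n)(\psi_n)\to 0$. Expanding $J'(u_n)(\psi_n)$ produces four contributions: the main term
$$
\mathcal{I}_n := \int_{\mathbb{R}^N} \varphi\, |\nabla u_n|^{p(x)-2}\nabla u_n \cdot (\nabla u_n - \nabla u)\, dx,
$$
a remainder weighted by $\nabla\varphi$, the potential term weighted by $V(x)\varphi$, and the nonlocal Choquard contribution.

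For the $\nabla\varphi$-term and the potential term, the compact embeddings of Proposition \ref{cpt-emb} give $u_n \to u$ strongly in $L^{p(x)}_{\mathrm{loc}}(\mathbb{R}^N)$, while $|\nabla u_n|^{p(x)-2}\nabla u_n$ and $|u_n|^{p(x)-2}u_n$ are bounded in $L^{p'(x)}(\mathbb{R}^N)$; H\"older's inequality forces both to vanish. For the Choquard contribution I apply Proposition \ref{HSL-variable} with $h(x)=F(x,u_n(x))$ and $g(y)=\varphi(y)f(y,u_n(y))(u_n(y)-u(y))$: growth $(f_1)$, H\"older and the local strong convergence of $u_n$ in $L^{q^{\pm}r(x)}_{\mathrm{loc}}$ and $L^{q^{\pm}s(x)}_{\mathrm{loc}}$ (guaranteed by \eqref{NOVAEQ}) yield $\|g\|_{L^{q^{\pm}}(\mathbb{R}^N)}\to 0$, while $\|F(\cdot,u_n)\|_{L^{q^{\pm}}(\mathbb{R}^N)}$ stays bounded. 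Therefore $\mathcal{I}_n \to 0$.

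Since $\nabla u_n \rightharpoonup \nabla u$ in $(L^{p(x)}(\mathbb{R}^N))^N$ and $\varphi\,|\nabla u|^{p(x)-2}\nabla u \in (L^{p'(x)}(\mathbb{R}^N))^N$ is fixed, we also obtain $\int \varphi\,|\nabla u|^{p(x)-2}\nabla u \cdot(\nabla u_n -\nabla u)\,dx \to 0$. Subtracting gives
$$
\int_{B_R(0)} \bigl(|\nabla u_n|^{p(x)-2}\nabla u_n - |\nabla u|^{p(x)-2}\nabla u\bigr)\cdot(\nabla u_n - \nabla u)\, dx \to 0.
$$
The integrand is pointwise nonnegative by the classical Simon inequalities (applied separately on $\{p(x)\geq 2\}$ and $\{1<p(x)<2\}$), so convergence in $L^1(B_R(0))$ yields pointwise a.e.\ convergence to zero along a subsequence; the Simon inequalities then force $\nabla u_n(x)\to \nabla u(x)$ a.e.\ on $B_R(0)$. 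A standard diagonal extraction over an exhaustion $B_{R_k}(0)\uparrow \mathbb{R}^N$ produces a single subsequence converging a.e.\ in $\mathbb{R}^N$, which proves (i).

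For (ii), the identity $\rho_{p'}(|\nabla u_n|^{p(x)-2}\nabla u_n)=\rho_{p}(\nabla u_n)$ shows that $(|\nabla u_n|^{p(x)-2}\nabla u_n)$ is bounded in $(L^{p'(x)}(\mathbb{R}^N))^N$. Combining this uniform bound with the a.e.\ convergence from (i) and the continuity of $\xi \mapsto |\xi|^{p(x)-2}\xi$, a standard bounded-plus-a.e.\ criterion identifies the weak limit as $|\nabla u|^{p(x)-2}\nabla u$, proving (ii). The main obstacle is the vanishing of $\mathcal{I}_n$, and within it the control of the nonlocal Choquard term, where the test function $\varphi(u_n-u)$ itself depends on $n$; this is precisely what Proposition \ref{HSL-variable}, combined with the local compact embeddings in variable exponents, is designed to handle.
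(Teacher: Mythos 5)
Your proposal is correct and takes essentially the same route as the paper: you test $J'(u_n)$ against $\varphi(u_n-u)$ (the paper equivalently uses $\varphi u_n$ and $\varphi u$ separately), kill the cut-off, potential and nonlocal Choquard remainders via local compact embeddings and Proposition \ref{HSL-variable}, deduce that the monotonicity term $\int \varphi\,(|\nabla u_n|^{p(x)-2}\nabla u_n-|\nabla u|^{p(x)-2}\nabla u)\cdot(\nabla u_n-\nabla u)\,dx$ vanishes, and then obtain a.e.\ gradient convergence and, for part (ii), weak convergence from the $L^{p'(x)}$ bound plus a.e.\ convergence. In effect you make explicit the steps the paper compresses into ``standard arguments,'' but the argument is the same.
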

\begin{proof}
Fix $R>0$ and $\varphi \in C^{\infty}_{0}(\mathbb{R}^N)$ such that $\varphi(x)=1$ for $x \in B_R(0).$ Since $J^{'}(u_n) \rightarrow 0$ in $A{'}$ and $(u_n)$ is bounded in $A$, we have $J^{'}(u_n)(u_n \varphi) = o_n(1)=J^{'}(u_n)(u \varphi).$
Setting 
$$
P_n(x):= ( |\nabla u_n(x)|^{p(x)-2}\nabla u_n (x) - |\nabla u(x)|^{p(x)-2} \nabla u(x)). (\nabla u_n(x)-\nabla u(x)), x \in \mathbb{R}^N,
$$
we derive 
\begin{equation*}
\begin{aligned}
&\int_{\mathbb{R}^N} P_n (x) \varphi (x) dx = J^{\prime}(u_n).(u_n \varphi) - \int_{\mathbb{R}^N} u_n(x) |\nabla u_n(x)|^{p(x)-2} \nabla u_n(x) \nabla \varphi(x) dx \\
&-\int_{\mathbb{R}^N} |u_n(x)|^{p(x)}\varphi(x) dx -J^{\prime} (u_n)(u\varphi) + \int_{\mathbb{R}^N} u(x)|\nabla u_n(x)|^{p(x)-2}\nabla u_n(x) \nabla \varphi(x) dx \\
&+ \int_{\mathbb{R}^N} |u_n(x)|^{p(x)-2} u_n(x) u(x) \varphi dx - \int_{\mathbb{R}^N} \int_{\mathbb{R}^N} \frac{F(x,u_n(x)) f(y,u_n(y)) (u_n(y)-u(y))v(y)}{|x-y|^{\lambda(x,y)}} dx dy \\
&- \int_{\mathbb{R}^N} |\nabla u(x)|^{p(x)-2}\nabla u(x) \nabla (u_n -u)(x) \varphi dx.
\end{aligned}
\end{equation*}
Standard arguments ensure that 
$$
\int_{\mathbb{R}^N} P_n(x) \varphi(x)\,dx \rightarrow 0.
$$
Therefore $\nabla u_n \rightarrow \nabla u$ in $L^{p(x)}(B_R(0))$ for all $R>0$. As $R$ is arbitrary,  we conclude that $\nabla u_n(x) \rightarrow \nabla u(x)$ a.e in $\mathbb{R}^N$ for some subsequence.

\textit{ii)} Using the fact that $|\nabla u_n|^{p(x)-2} \nabla u_n$ is bounded in $L^{\frac{p(x)}{p(x)-1}}(\mathbb{R}^N)$ and the pointwise convergence  $|\nabla u_n(x)|^{p(x)-2} \nabla u_n(x) \rightarrow |\nabla u(x)|^{p(x)-2} \nabla u(x)$ a.e in $\mathbb{R}^N$, we have  
$$
|\nabla u_n |^{p(x)-2} \nabla u_n \rightharpoonup |\nabla u_n |^{p(x)-2} \nabla u_n \ \text{in} \ (L^{\frac{p(x)}{p(x)-1}}({\mathbb{R}}^N))^N.
$$
according to \cite[Proposition 2.6]{AlvesFerreira}. 
\end{proof}

Now, we are ready to prove that $u$ is a critical point of $J.$

\begin{lemma}\label{critical-point} The function $u$ is a critical point of $J$, that is, $J^{'}(u)=0$.
\end{lemma}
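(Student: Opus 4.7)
The plan is to test $J'(u_n)$ against functions $v \in C_0^\infty(\mathbb{R}^N)$, pass to the limit term by term, and then invoke the density of $C_0^\infty(\mathbb{R}^N)$ in $A$ (which is automatic when $A=E$ by construction, and standard for $A=W^{1,p(x)}(\mathbb{R}^N)$ under the Lipschitz hypothesis on $p$) together with the continuity of the linear functional $J'(u) \in A'$ to conclude that $J'(u)v = 0$ for every $v \in A$. Since $J'(u_n) \to 0$ in $A'$ and $(u_n)$ is bounded in $A$, we have $J'(u_n)v = o_n(1)$ for every fixed $v$, so the task reduces to showing that each of the three contributions to $J'(u_n)v$ converges to the corresponding term in $J'(u)v$.

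For the gradient term, Lemma~\ref{lemma-3}(ii) gives the weak convergence $|\nabla u_n|^{p(x)-2}\nabla u_n \rightharpoonup |\nabla u|^{p(x)-2}\nabla u$ in $(L^{p(x)/(p(x)-1)}(\mathbb{R}^N))^N$; since $\nabla v$ sits in the dual space $(L^{p(x)}(\mathbb{R}^N))^N$, the integral $\int_{\mathbb{R}^N}|\nabla u_n|^{p(x)-2}\nabla u_n \cdot \nabla v\,dx$ passes to the limit immediately. The nonlocal term is delivered directly by Lemma~\ref{CONVEGENCIA}(iii), which was proved precisely to handle this passage. For the potential term, I would fix $R>0$ so that $\mathrm{supp}\,v \subset B_R(0)$; the compact embedding of Proposition~\ref{cpt-emb}, combined with the a.e.\ convergence $u_n(x) \to u(x)$ and a routine dominated convergence argument bounded by the $L^{p(x)}$-strong limit, yields $|u_n|^{p(x)-2}u_n \to |u|^{p(x)-2}u$ strongly in $L^{p(x)/(p(x)-1)}(B_R(0))$. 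Since $V$ is continuous and $v$ has compact support with $Vv \in L^{p(x)}(B_R(0))$, the H\"older pairing passes to the limit.

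The only slightly delicate point is that $v$ is tested only against compactly supported functions, so the strong information on $u_n \to u$ is purely local; the gradient and nonlocal terms, however, are handled by convergence results valid on all of $\mathbb{R}^N$ (Lemma~\ref{lemma-3}(ii) and Lemma~\ref{CONVEGENCIA}(iii) respectively), so localizing $v$ loses nothing. Combining the three limits gives $J'(u)v=0$ for every $v \in C_0^\infty(\mathbb{R}^N)$, and the density argument then extends this to all $v \in A$, establishing that $u$ is a critical point of $J$. The main obstacle, should one arise, is confirming the strong convergence of $|u_n|^{p(x)-2}u_n$ in the dual space locally; this is essentially the variable-exponent analogue of the classical Br\'ezis--Lieb type argument and is made routine here by the compact local embedding together with the pointwise gradient convergence already secured in Lemma~\ref{lemma-3}.
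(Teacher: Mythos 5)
Your argument is correct and follows essentially the same route as the paper: test $J'(u_n)$ against $v\in C^{\infty}_{0}(\mathbb{R}^N)$, pass to the limit in the nonlocal term via Lemma \ref{CONVEGENCIA}, in the gradient term via the weak convergence of Lemma \ref{lemma-3}(ii), and in the potential term via a local dominated convergence argument, then use $J'(u_n)\to 0$ and the density of $C^{\infty}_{0}(\mathbb{R}^N)$ in $A$ to conclude $J'(u)=0$. Your treatment of the $V$-term is merely a more detailed version of the paper's one-line appeal to the Dominated Convergence Theorem, so there is no substantive difference.
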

\begin{proof}
First of all, we claim that 
$$
J^{'}(u_n)v \to  J^{'}(u)v, \quad \forall v \in C^{\infty}_{0}(\mathbb{R}^N).
$$
In order to verify such limit, note that
\begin{align*}
J^{'}(u_n)v&= \int_{\mathbb{R}^N} |\nabla u_n(x)|^{p(x)-2}  \nabla u_n(x)\nabla v(x) + V(x)|u_n|^{p(x)-2}u_n(x) v(x) dx \\
&- \int_{\mathbb{R}^N} \int_{\mathbb{R}^N}  \frac{F(x,u_n(x))f(y,u_n(y))v(y)}{|x-y|^{\lambda(x,y)} } dx dy.
\end{align*}
By Lemmas \ref{CONVEGENCIA}  and \ref{lemma-3} we get

\begin{equation}\label{lim-HSL}
\int_{\mathbb{R}^N} \int_{\mathbb{R}^N} \frac{F(x,u_n(x)) f(y,u_n(y))v(y)}{|x-y|^{\lambda(x,y)}} dx dy \rightarrow \int_{\mathbb{R}^n} \int_{\mathbb{R}^N} \frac{F(x,u(x)) f(y,u(y))v(y)}{|x-y|^{\lambda(x,y)}} dx dy.
\end{equation}
and
\begin{equation}\label{lim-marc}
\int_{\mathbb{R}^N} |\nabla u_n (x)|^{p(x)-2} \nabla u_n(x)\nabla v(x)  dx \rightarrow \int_{\mathbb{R}^N} |\nabla u(x)|^{p(x)-2} \nabla u(x).\nabla v(x)  dx.
\end{equation}
Moreover, by Lebesgue's Dominated Convergence Theorem  we also have 
$$ 
\int_{\mathbb{R}^N} V(x) |u_n(x)|^{p(x)-2}u_n(x) v(x) dx \rightarrow \int_{\mathbb{R}^N} V(x) |u(x)|^{p(x)-2}u(x) v(x) dx,
$$
which combined with the relations  \eqref{lim-HSL} and \eqref{lim-marc} proves the claim. As $J'(u_n)v \to 0$, the claim ensures that $J^{'}(u)v = 0$ for all $v \in C^{\infty}_{0}(\mathbb{R}^N)$. Now, the lemma follows by using the fact that  $C^{\infty}_{0}(\mathbb{R}^N)$ is dense in $A$. 
\end{proof}

\subsection{Proof of Theorem \ref{existence}}. 
In the sequel, we will divide the proof into two cases, which are related to the conditions $(V_1)$ and $(V_2)$.\\

\noindent \textbf{Case 1:} \, $(V_1)$ holds: \\

If $u\neq 0 $, then $u$ is a nontrivial solution and the theorem is proved. If $u=0$, we must find another solution $v \in W^{1,p(x)}(\mathbb{R}^N) \setminus\{ 0\} $ for the equation \eqref{Choquard-eq}. For such purpose, the claim  below is crucial in our argument. 

\begin{claim}\label{lions-conseq}
There exist $r>0,\beta>0$ and a sequence $(y_n) \subset \mathbb{R}^N$ such that
$$
\liminf_{n \rightarrow + \infty} \int_{B_r(y_n)} |u_n(x)|^{p(x)} dx \geq \beta>0.
$$
\end{claim}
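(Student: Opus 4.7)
The plan is to prove the claim by contradiction: suppose it fails, so that for every $r>0$
$$
\lim_{n\to+\infty}\sup_{y\in\mathbb{R}^N}\int_{B_r(y)}|u_n(x)|^{p(x)}\,dx=0.
$$
Then by Lions' Lemma for variable exponents (Lemma \ref{lions-variable}), applied to the bounded sequence $(u_n)\subset W^{1,p(x)}(\mathbb{R}^N)$, one gets $u_n\to 0$ in $L^{q(x)}(\mathbb{R}^N)$ for every $q\in C^{+}(\mathbb{R}^N)$ with $p\ll q\ll p^{\star}$. The crucial point is that the hypothesis (\ref{NOVAEQ}), namely $p\ll rq^{-}\leq rq^{+}\ll p^{\star}$ and $p\ll sq^{-}\leq sq^{+}\ll p^{\star}$, ensures that the exponents $q^{\pm}r(x)$ and $q^{\pm}s(x)$ fall precisely in the admissible Lions range, so $u_n\to 0$ in each of the four spaces $L^{q^{+}r(x)}$, $L^{q^{-}r(x)}$, $L^{q^{+}s(x)}$, $L^{q^{-}s(x)}$.

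Next I would feed these convergences into the growth estimate $(F)$ to conclude that $\|F(\cdot,u_n)\|_{L^{q^{+}}(\mathbb{R}^N)}\to 0$ and $\|F(\cdot,u_n)\|_{L^{q^{-}}(\mathbb{R}^N)}\to 0$, arguing exactly as in the norm estimates of Lemma \ref{first-geo}. Then Proposition \ref{HSL-variable}, combined with the boundedness of $\|f(\cdot,u_n)u_n\|_{L^{q^{\pm}}(\mathbb{R}^N)}$ (which is uniform in $n$ by the same argument as in Step 2 of Lemma \ref{C1}, since $(u_n)$ is bounded in $A$), gives
$$
\Psi(u_n)\longrightarrow 0\quad\text{and}\quad
\int_{\mathbb{R}^N}\int_{\mathbb{R}^N}\frac{F(x,u_n(x))f(y,u_n(y))u_n(y)}{|x-y|^{\lambda(x,y)}}\,dx\,dy\longrightarrow 0.
$$

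Finally I would combine these limits with the Palais--Smale information. Since $J'(u_n)\to 0$ and $(u_n)$ is bounded, $J'(u_n)u_n\to 0$; but $J'(u_n)u_n$ equals the modular $\int(|\nabla u_n|^{p(x)}+V(x)|u_n|^{p(x)})\,dx$ minus the nonlocal term that we just showed tends to zero, so the modular itself tends to zero. Going back to the definition of $J$ and using $V\geq V_0>0$ together with $p^{-}\leq p(x)\leq p^{+}$, this forces $J(u_n)\to 0$, contradicting $J(u_n)\to d>0$ from the mountain pass geometry of Lemma \ref{first-geo}. The main obstacle is purely bookkeeping: one must check carefully that the four Lebesgue exponents $q^{\pm}r(x),q^{\pm}s(x)$ really do satisfy the strict inequalities $p\ll\cdot\ll p^{\star}$ required by Lions' Lemma, but this is guaranteed by assumption (\ref{NOVAEQ}); the rest of the argument is a direct chaining of Proposition \ref{HSL-variable} with the growth control on $F$ and $f$.
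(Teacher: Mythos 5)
Your proposal is correct and follows essentially the same route as the paper: argue by contradiction, invoke the variable-exponent Lions lemma (using (\ref{NOVAEQ}) to place $q^{\pm}r(x)$, $q^{\pm}s(x)$ in the admissible range), estimate the nonlocal term via Proposition \ref{HSL-variable} together with the growth conditions, and then use $J'(u_n)u_n=o_n(1)$ to force the modular, hence $J(u_n)$, to zero, contradicting $J(u_n)\to d>0$. The only cosmetic difference is that the paper sends both factors $\|F(\cdot,u_n)\|_{L^{q^{\pm}}}$ and $\|f(\cdot,u_n)u_n\|_{L^{q^{\pm}}}$ to zero, while you send one to zero and keep the other bounded, which is equally valid.
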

\begin{proof}
Suppose that the claim is false. Then, by Lemma \ref{lions-variable}  
 \begin{equation}\label{lions-app}
 u_n \rightarrow 0 \quad \mbox{in} \quad  L^{t(x)}(\mathbb{R}^n),
 \end{equation}
for all $t \in C^{+}(\mathbb{R}^N)$ with $p \ll t \ll p^{\star}$. Applying Proposition \ref{HSL-variable}, 
\begin{align*}
\left| \int_{\mathbb{R}^N} \int_{\mathbb{R}^N} \frac{F(x,u_n(x))f(y,u_n(y))u_n(y)}{|x-y|^{\lambda(x,y)}} dx dy\right |&\leq C \| F(x,u_n(x))\|_{L^{q^{+}}(\mathbb{R}^N)} \| f(y,u_n(y))u_n(y)\|_{L^{q^{+}}(\mathbb{R}^N)} \\
&+  C\| F(x,u_n(x))\|_{L^{q^{-}}(\mathbb{R}^N)} \| f(y,u_n(y))u_n(y)\|_{L^{q^{-}}(\mathbb{R}^N)} .
\end{align*}
By $(f_1)$, $(F)$, (\ref{NOVAEQ}) and \eqref{lions-app}, 
$$
\int_{\mathbb{R}^N} |F(x,u_n(x))|^{q^{+}} dx  \rightarrow 0, 
$$
$$
\int_{\mathbb{R}^N} |F(x,u_n(x))|^{q^{-}} dx \rightarrow 0, 
$$
$$ 
\int_{\mathbb{R}^N} |f(y,u_n(y))u_n(y)|^{q^{+}} dy \rightarrow 0
$$
and
$$ \int_{\mathbb{R}^N} |f(y,u_n(y))u_n(y)|^{q^{-}} dy \rightarrow 0.
$$
Therefore
$$
\int_{\mathbb{R}^N} \frac{F(x,u_n(x))f(y,u_n(y))u_n(y)}{|x-y|^{\lambda(x,y)}} dx dy \rightarrow 0.
$$
The above limit together with the fact that $J'(u_n)u_n=o_n(1)$ give 
$$ 
\int_{\mathbb{R}^N} (|\nabla u_n(x)|^{p(x)} + V(x) |u_n(x)|^{p(x)}) \, dx \to 0,
$$
or equivalently,
$$
u_n \to 0 \quad \mbox{in} \quad A.
$$
This limit leads to $J(u_n) \rightarrow 0$, which contradicts the limit $J(u_n) \rightarrow d >0$.
\end{proof}

By using standard arguments, we can assume  in Claim \ref{lions-conseq} that  $(y_n) \subset \mathbb{Z}^N$. As $q$ is $\mathbb{Z}^N$-periodic, the Remark \ref{RMK1} yields $\lambda$ is $\mathbb{Z}^N \times \mathbb{Z}^N$-periodic. This fact combined with the periodicity of $p, V, f(.,t)$ and $F(.,t)$ guarantee that the function $v_n(x)=u_n(x+y_n)$ satisfies 
$$
J(v_n) = J(u_n), \|J'(v_n)\|=\|J'(u_n)\|  \quad \mbox{and} \quad \| u_n\| = \| v_n\| \quad \forall n \in \mathbb{N}.
$$ 
From the above information, $(v_n)$ is a $(PS)_d$ sequence for $J$. Since $(v_n)$ is bounded in $A$,  up to a subsequence, $v_n \rightarrow v$ in $L^{p(x)}(B_r(0))$ for some $v \in A.$ In order to verify that $v \neq 0$, note that by Claim \ref{lions-conseq}
$$
0<\beta \leq \lim_{n \rightarrow +\infty} \int_{B_r(y_n)} |u_n(x)|^{p(x)} dx 
= \lim_{n \rightarrow +\infty} \int_{B_r(0)} |v_n(x)|^{p(x)} dx  
= \int_{B_r(0)} |v(x)|^{p(x)} dx.
$$

\noindent \textbf{Case 2:} $(V_2)$ holds:\\
 
To begin with, recall that there is a sequence $(u_n) \subset E$ such that
$$
J(u_n) \to d \quad \mbox{and} \quad J'(u_n) \to 0,
$$  
where $d>0$ is the mountain pass level given by \eqref{level}. Since $(u_n)$ is a bounded sequence in $E$, we can assume that for some subsequence of $(u_n)$, still denote by itself,  there is $u \in E$ such that $u_n \rightharpoonup u$ in $E.$
By $(V_2)$,
$$
u_n \to u \quad \mbox{in} \quad L^{s(x)}(\mathbb{R}^N) \quad \ \text{for all} \ s \in C^{+}(\mathbb{R}^N) \ \text{with}  \ p \ll s \ll p^{\star}.
$$
Suppose that $u \equiv 0$. The last limit combined with $(f_1)-(f_2)$, $(F)$ and (\ref{NOVAEQ}) give 
$$
F(.,u_n) \to 0 \quad \mbox{in} \quad L^{q^+}(\mathbb{R}^{N}),  
$$
$$
F(.,u_n) \to 0 \quad \mbox{in} \quad L^{q^-}(\mathbb{R}^{N}),  
$$
$$
f(.,u_n)u_n \to 0 \quad \mbox{in} \quad L^{q^+}(\mathbb{R}^{N})  
$$
and
$$
f(.,u_n)u_n \to 0 \quad \mbox{in} \quad L^{q^-}(\mathbb{R}^{N}).  
$$
The above limits combined with Proposition (\ref{HSL-variable}) imply that 
$$
\int_{\mathbb{R}^N} \frac{F(x,u_n(x))f(y,u_n(y))u_n(y)}{|x-y|^{\lambda(x,y)}} dx dy \to 0.
$$
Now, gathering this limit with $J'(u_n)u_n=o_n(1)$, we find
$$
\int_{\mathbb{R}^N}|\nabla u_n|^{p((x)}+V(x)|u_n(x)|^{p(x)}\, dx \to 0,
$$
from where it follows that
$$
u_n \to 0 \quad \mbox{in} \quad E,
$$
showing that $J(u_n) \to 0$, contradicting again the limit $J(u_n) \rightarrow d >0$.  
\newline

\noindent \textbf{Acknowledgements:}
This work was done while the second author was visiting the  Federal University of Campina Grande. He thanks the hospitality of professor Claudianor Alves and of the other members of the department.

\bigskip


\noindent \textsc{Claudianor O. Alves} \hfill \textsc{Leandro da S. Tavares}\\
Unidade Acad\^{e}mica de Matem\'atica\hfill Unidade Acad\^{e}mica de Matem\'atica\\
Universidade Federal de Campina Grande \hfill Universidade Federal de Campina Grande\\
Campina Grande, PB, Brazil,  \hfill Campina Grande, PB, Brazil,\\
 CEP:58429-900 \hfill CEP:58429-900 \\
\texttt{coalves@mat.ufcg.br} \hfill \texttt{lean.mat.ufcg@gmail.com}

\end{document}